\newcommand{\z}{\mathbb Z}
\newcommand{\q}{\mathbb Q}
\newtheorem{lem}{Lemma}[section]
\newtheorem{defn}[lem]{Definition}
\newtheorem{ex}[lem]{Example}
\newtheorem{co}[lem]{Corollary}
\newtheorem{thm}[lem]{Theorem}
\newtheorem{prop}[lem]{Proposition}
\newtheorem{qu}[lem]{Question}
\newenvironment{proof}{\textbf{Proof.}}{\newline\hspace*{\fill}{$\Box$}}
\begin{document}
\title{Large Groups of Deficiency 1}
\author{J.\,O.\,Button\\
Selwyn College\\
University of Cambridge\\
Cambridge CB3 9DQ\\
U.K.\\
\texttt{jb128@dpmms.cam.ac.uk}}
\date{}
\maketitle
\newpage
\begin{abstract}
We prove that if a group possesses a deficiency 1 presentation 
where one of the relators is a commutator then it is the integers 
times the integers, is large, or is as far as possible from being 
residually finite. Then we use this to show that a mapping torus 
of an endomorphism of a finitely generated free group is large if 
it contains the integers times the integers as a subgroup of 
infinite index, as well as showing that such a group is large if 
it contains a Baumslag-Solitar group of infinite index and has a
finite index subgroup with first Betti number at least 2. We give 
applications to free by cyclic groups, 1 relator groups and 
residually finite groups.

\end{abstract}

\section{Introduction}

Recall \cite{p1} that a finitely generated group $G$ is large 
if it has a finite index
subgroup possessing a homomorphism onto a non-abelian free group. 
This is a strong property and implies that
$G$ contains a non-abelian free subgroup \cite{pmn}, 
$G$ is SQ-universal \cite{p1} (every
countable group is a subgroup of a quotient of $G$), $G$ has finite index
subgroups with arbitrarily large first Betti number \cite{lub}, 
$G$ has uniformly exponential word growth \cite{har},
as well as having subgroup growth of strict type 
$n^n$ (which is the largest possible growth for finitely generated groups) 
\cite{luse}, and
the word problem for $G$ is solvable strongly generically in 
linear time \cite{kmss}. Thus 
on proving that $G$ is large we obtain all these other properties for free.

There have been a range of results that give criteria for finitely
generated or finitely presented groups to be large. Starting
with B.\,Baumslag and S.\,J.\,Pride \cite{bp} which showed that groups with a 
presentation of deficiency at least 2 are large, we then have in \cite{gr}
a condition that implies this result, as well as a proof that a group with
a deficiency 1 presentation in which one of the relators is a proper power
is large. This latter result was also independently derived by St\"ohr in
\cite{st} and was followed by conditions for a group with a 
deficiency 0 presentation where some of the relators are proper powers to be
large, due to Edjvet in \cite{edj}. Then further conditions for a finitely
presented group to be large, all of which imply the Baumslag-Pride result,
are by Howie in \cite{how}, G.\,Baumslag in \cite{bag} 
and a characterisation by 
Lackenby in \cite{lac}. In Section 2 we give a criterion, based on the Howie
result, for a finitely presented group $G$ to be large which is purely in
terms of the Alexander polynomial of $G$ and is straightforward to use in
practice. This result is particularly powerful in the case of deficiency 1
groups which are then our focus for much of the rest of the paper.
Of course unlike groups of deficiency 2 or higher, not all groups of deficiency
1 are large: think of $\z$ or the soluble Baumslag-Solitar groups given by
the presentations $\langle x,y|xyx^{-1}=y^m\rangle$ for $m\in\z\backslash
\{0\}$. Other examples of non-large deficiency 1 groups were given by Pride 
and Edjvet in \cite{ep2} consisting of those Baumslag-Solitar groups
$\langle x,y|xy^lx^{-1}=y^m\rangle$ for $l,m\neq 0$ where $l$ and $m$ are
coprime, as well some HNN extensions of these, and one can find the odd further
example in the literature. 

As for large groups of deficiency 1, we have already mentioned those with a
relator that is a proper power and we again have examples in \cite{ep2} with
Theorem 6 stating that the group $\langle x,y|x^ny^lx^{-n}=y^m\rangle$ for
$l,m,n\neq 0$ is large if $|n|>1$ or if $l$ and $m$ are not coprime. Further
results of a more technical nature which give largeness for some other
2 generator 1 relator presentations are in \cite{ephd}. 
At this point it seems difficult to say convincingly either way whether groups
of deficiency 1 are generally large. In this paper we hope to offer substantial
evidence that largeness is a natural property to expect in a deficiency 1
group. Although we will display a few new groups of deficiency 1 which are not
large in Example 3.5(ii), our main results are on establishing families of 
deficiency 1 groups which are all large. In Section 3
we introduce the concept of a non abelian residually abelianised (NARA)
group and this has a number of equivalent definitions,
one of which is that it is finitely generated and non-abelian but has no
non-abelian finite quotients; the idea being that a NARA group $G$ is as
far from being residually finite as possible because we cannot distinguish
$G$ from its abelianisation $G/G'$ by just looking at finite index
subgroups. 
We obtain Theorem 3.6 which states that if $G$ has a deficiency 1 presentation
in which one of its relators is a commutator then $G=\z\times\z$ or $G$ 
is NARA with abelianisation $\z\times\z$ or
$G$ is large.

In \cite{ephd} from 1984 it is asked if those groups
which are an extension of a finitely generated non-abelian free group by
$\z$ are large. They are certainly torsion free groups with a natural
deficiency 1 presentation and are also called
mapping tori of finitely generated non-abelian free group
automorphisms. These groups appear
to make up a sizeable class of deficiency 1 groups but we can expand this
class considerably
by allowing arbitrary endomorphisms in place of automorphisms to obtain
groups which are ascending HNN extensions of finitely
generated free groups. Such groups have
been the attention of much recent research where significant progress has been
made. In particular these groups have been shown to be coherent (every finitely
generated subgroup is finitely presented) in \cite{fh}, Hopfian in \cite{gmsw}
and even residually finite in \cite{bs}. If largeness were added to 
this list (on removing the obvious small exceptions) 
then it would show that such an HNN extension, indeed even a group which is 
virtually such an HNN extension, has all the
nice properties that one could reasonably hope for.

In Section 4 we apply our results to show that for $G$ a mapping torus
of a finitely generated free group endomorphism, we have $G$ is large if it
contains a $\z\times\z$ subgroup of infinite index. Also $G$ is large if it
contains a Baumslag-Solitar subgroup and has a finite index subgroup $H$ 
($\neq\z\times\z$) with $\beta_1(H)\geq 2$. Of course if $\beta_1(H)=1$ for
all $H$ then we would have an example of such a $G$ which is not large.
However we know of no examples apart from the soluble Baumslag-Solitar groups
themselves, and it seems believable that no other $G$ has this property.
 
In Section 5 we restrict to groups $G$ of the form $F$-by-$\z$ where $F$ is
free. By Section 4 $G$ is large if it
contains $\z\times\z$ and $F=F_n$ is of finite rank $n\geq 2$. 
It is known by \cite{bf}, \cite{bfad} and \cite{brink} that
these are exactly the groups of the form $F_n$-by-$\z$ which are not word
hyperbolic. We also show that if $F$ is of infinite rank but $G$ is finitely
generated then $G$ is in fact large.
By combining these results with known facts
about word hyperbolic groups, this allows us to prove in Theorem 5.4 that
if $G$ is any finitely generated group which is virtually free-by-$\z$ then
(apart from the obvious small exceptions) $G$ is SQ-universal, has uniformly
exponential growth and has a word problem that is solvable strongly
generically in linear time. This is also true for the finitely generated
subgroups of $G$.

Section 6 looks at 1 relator groups $G$, where we need only consider the
case where $G$ has a 2 generator 1 relator presentation. We know that by
Section 3 we obtain largeness unless $G=\z\times\z$ (which is easily detected
in the 1 relator case) 
or $G$ is NARA. It is true that 2 generator 1 relator groups which are 
NARA exist, but if we insist that the relator is a product of 
commutators then no examples are known; indeed it was only recently that
non residually finite examples of such groups were given in \cite{nypb}
Problem (OR7). Moreover if the
relator is a single commutator then no examples are known that fail even to
be residually finite (this is Open Problem (OR8) in \cite{nypb})
so in this case not being NARA and hence
large seems very likely. We give methods that show this in practice for
a given presentation. Finally in Section 7 we make the straightforward
but useful observation that a group $G$ is large if and only if the
quotient of $G$ by its finite residual is large, suggesting that the
best setting in which to examine largeness is the residually finite
case. We prove that a residually finite group with infinitely many ends
is large (this is most definitely not true if residual finiteness is
removed) and examine finitely presented groups which are LERF, which is a
strengthening of being residually finite. 

The author would like to
acknowledge helpful comments from Ilya\\
Kapovich, Gilbert Levitt and Alec
Mason, as well as thanking the referee for a thorough reading of the paper.

\section{A Condition for Largeness}

We quote the following facts that we will need about the Alexander polynomial
of a finitely presented group; see \cite{lic}.
Let $G$ be given by a 
finite presentation $\langle x_1,\ldots ,x_n|$
$r_1,\ldots ,r_m\rangle$ and let $G'$ be the derived (commutator) subgroup 
of $G$. Then the abelianisation 
$\overline{G}=G/G'$ is a finitely generated abelian group $\z^{\beta_1(G)}
\times 
T$ for $T$ the torsion subgroup whereas the free abelianisation $ab(G)$ is
$\z^{\beta_1(G)}$. On taking any 
surjective homomorphism $\chi:G\rightarrow\z$, we have the 
Alexander polynomial $\Delta_{G,\chi}\in\mathbb Z[t^{\pm 1}]$ which is a
Laurent polynomial up to the ambiguity of multiplication by the units 
$\pm t^k$ for $k\in\z$. It is defined in the following way: on taking $t$
to be an element in $G$ with $\chi(t)=1$ we have that $t$ acts by
conjugation on $H_1(\mbox{ker }\chi;\z)$, so $H_1(\mbox{ker }\chi;\z)$ is a
module over the group ring $\z[t^{\pm 1}]$ of the integers. It is easy to see
that this is a finitely presented module, for instance we could use the
Reidemeister-Schreier rewriting process to obtain a presentation of 
$\mbox{ker }\chi$ from that of $G$ and then abelianise, which would result
here in an $(n-1)\times m$ presentation matrix. Thus we have the first
elementary ideal which is generated by the maximal minors, these being the
determinants of the matrices left over when we cross off the correct number
of columns to make the resulting matrix square (here we are assuming there
are at least as many columns as rows, or else we let the first elementary
ideal and the Alexander polynomial be zero). Note that this ideal is
independent of the particular presentation matrix chosen for 
$H_1(\mbox{ker }\chi;\z)$.
The definition of the Alexander polynomial $\Delta_{G,\chi}(t)$ is then the
generator (up to units) of the smallest principal ideal containing the first
elementary ideal, or equivalently the highest common factor of the maximal
minors.

The next point is the crucial fact which allows us to use the Alexander
polynomial to detect largeness.
\begin{thm}
If $G$ is a finitely presented group which has a homomorphism $\chi$ onto $\z$
such that $\Delta_{G,\chi}=0$ then $G$ is large.
\end{thm}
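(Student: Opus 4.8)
The hypothesis $\Delta_{G,\chi}=0$ says that every maximal minor of an $(n-1)\times m$ presentation matrix for $H_1(\ker\chi;\z)$ over $\z[t^{\pm1}]$ vanishes; equivalently the first elementary ideal is zero, so the matrix has rank $<n-1$ over the field of fractions $\q(t)$. The idea is to exploit the Howie largeness criterion (cited as \cite{how}): roughly, if a finitely presented group admits a surjection to $\z$ such that the corresponding infinite cyclic cover has a suitably "deficient" behaviour — concretely, a finite-index subgroup with enough homology to surject a non-abelian free group — then the group is large. So the task reduces to converting the vanishing of the Alexander polynomial into a statement about first Betti numbers of subgroups of finite index.

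First I would fix $\chi\colon G\to\z$ and $t\in G$ with $\chi(t)=1$, and let $K=\ker\chi$. Passing to the subgroups $K\rtimes m\z \le G$ of index $m$ (i.e. replacing $t$ by $t^m$) corresponds to base-changing the $\z[t^{\pm1}]$-module $H_1(K;\z)$ along $\z[t^{\pm1}]\hookrightarrow\z[s^{\pm1}]$, $t\mapsto s$ ... more precisely to viewing $H_1(K;\z)$ as a module over the subring generated by $t^m$. The key computation is that $\beta_1$ of the index-$m$ subgroup $G_m=\chi^{-1}(m\z)$ is $1$ plus the rank of the cokernel of $(t^m-1)$ acting on the free part of $H_1(K;\z)\otimes\q$, and that $\Delta_{G,\chi}=0$ forces this free part to be infinite-dimensional over $\q$ — hence, by a counting argument, the $\q$-dimension of that cokernel, equivalently $\beta_1(G_m)$, grows without bound as $m\to\infty$ (or at the very least exceeds any prescribed constant for suitable $m$). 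This is the substantive step: one needs that a nonzero finitely generated torsion $\z[t^{\pm1}]$-module has $\Delta\neq 0$, so $\Delta=0$ means $H_1(K;\q)$ has positive (in fact infinite, after noting finite presentation) rank as a $\q(t)$-vector space, and then the action of $t^m-1$ on an infinite-dimensional space with $t$ of infinite order produces large cokernels.

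With unbounded first Betti number of finite-index subgroups in hand, I would feed this into the Howie criterion (or alternatively into the Baumslag–Pride-type machinery as repackaged by Howie): having a finite-index subgroup $H$ with a presentation whose deficiency, after the Betti number bound, is effectively $\ge 2$ — or directly a finite-index subgroup mapping onto $\z\times\z$ with an extra generator to spare — yields a finite-index subgroup surjecting a non-abelian free group, i.e. $G$ is large. Concretely, one shows some $G_m$ together with a further covering has a presentation of deficiency $\ge 2$, invoking \cite{bp}, or one directly applies Howie's theorem whose hypotheses are exactly "enough homology in a $\z$-cover". The main obstacle is the bookkeeping in the second step: making precise how $\Delta_{G,\chi}=0$ propagates through the Reidemeister–Schreier presentations of the tower $G_m$ to give a genuinely unbounded $\beta_1$ (and not merely unbounded size of the torsion), and then checking that Howie's hypotheses are literally met rather than morally met. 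Everything else — the module-theoretic reformulation of $\Delta=0$, the base-change description of $G_m$, the linear algebra of $t^m-1$ on $H_1(K;\q)$ — is routine once set up carefully.
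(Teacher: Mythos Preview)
Your reading of $\Delta_{G,\chi}=0$ is right: over $\q$, the structure theorem for finitely presented modules over the PID $\q[t^{\pm1}]$ shows this is equivalent to $H_1(\ker\chi;\q)$ having a free $\q[t^{\pm1}]$-summand of positive rank (equivalently, being infinite-dimensional over $\q$). The paper does exactly this and then stops, because Howie's hypothesis in \cite{how} is \emph{literally} that $H_1$ of the infinite cyclic cover, over a field $\mathbb F$, contains a free $\mathbb F[\z]$-module of rank at least $1$. Once the free summand is exhibited, Howie's Theorem~2.2 applies verbatim and produces a finite-index subgroup $NG^n$ surjecting $F_2$. No passage to the subgroups $G_m=\chi^{-1}(m\z)$ is needed.

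Your detour through $\beta_1(G_m)$ is correct as far as it goes --- a rank-$r$ free summand contributes $rm$ to $\dim_\q H_1(K;\q)/(t^m-1)$, so $\beta_1(G_m)\ge 1+rm$ --- but it does not close the argument. The step ``unbounded $\beta_1$ in finite-index subgroups, hence some subgroup of deficiency $\ge 2$, hence large by Baumslag--Pride'' is a genuine gap: large $\beta_1$ gives no lower bound on deficiency, since the upper bound is $\beta_1-\beta_2$ and $\beta_2$ of the Reidemeister--Schreier presentation of $G_m$ can grow as well. Nor does linear growth of $\beta_1$ along a cyclic tower yield largeness via any black-box result short of Howie's. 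In effect your Betti-number computation is re-deriving the opening move of Howie's own proof; to finish you would still have to carry out the rest of that argument (building the actual surjection to $F_2$ from the homological data), which is exactly what citing \cite{how} accomplishes. So either cite Howie directly on the free summand, as the paper does, or commit to reproving Howie in full; the Betti-number growth by itself is not a substitute.
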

\begin{proof}
We have seen that $H_1(\mbox{ker }\chi;\z)$ is a finitely presented module
over $\z[t^{\pm 1}]$ but we can also take rational coefficients and use the
fact that\\ 
$H_1(\mbox{ker }\chi;\z)\otimes_\z\q=H_1(\mbox{ker }\chi;\q)$ is
a finitely presented module over $\q[t^{\pm 1}]$ where $t$ acts in the
same way, and we even have the same presentation matrix. Thus we can define
the Alexander polynomial over $\q$ exactly as above in terms of the first
elementary ideal, and it will be the same polynomial as for $\z$, except
that now it is only defined up to units of $\q[t^{\pm 1}]$ which are now
$qt^{\pm n}$ for $q\in \q\backslash\{0\}$. 
However note that $\Delta_{G,\chi}$ is zero over
$\z$ if and only if it is zero over $\q$. The advantage of moving to rational
coefficients is that $\q[t^{\pm 1}]$ is a principal ideal
domain, so by the structure theorem it is a direct sum of cyclic modules.
Thus the presentation matrix $P$ can be put into canonical form in which all
off-diagonal entries are zero and the diagonal entries are $d_1,\ldots ,d_k$
for $d_i\in\q[t^{\pm 1}]$. By evaluating the first elementary ideal we see
that the Alexander polynomial over $\q$ is $d_1\ldots d_k$ and this is zero
if and only if some $d_i$ is zero which happens if and only if
$H_1(\mbox{ker }\chi;\mathbb Q)$ has a free $\mathbb Q[\z]$-module of
at least rank 1 in its decomposition.

Now we invoke Howie's condition for largeness in \cite{how}
Section 2. Adopting that notation, we let $K$ be the standard connected
2-complex obtained from our finite presentation of $G$,
with $N=\mbox{ker }\chi$ and $\overline{K}$ the
2-complex which is the regular covering of $K$ corresponding to $N$ so that
$\pi_1(\overline{K})=N$. Let $\mathbb F$ be a field: on following
through the proof of \cite{how} Proposition 2.1, we see that if 
$H_1(\overline{K};\mathbb F)$ contains a free $\mathbb F[\z]$-module of rank
at least 1 then the conclusion of the proposition holds. But this is the
hypothesis of \cite{how} Theorem 2.2 which proves that for any sufficiently
large $n$ the finite index subgroup $NG^n$ admits a homomorphism onto
the free group of rank 2.

In our case we have on setting $\mathbb F=\q$
that $H_1(\overline{K};\q)=H_1(\mbox{ker }\chi;\q)$ so if $\Delta_{G,\chi}=0$
we conclude that $G$ is large.
\end{proof}

Note: the above also works if we take $\mathbb F$ to be 
$\mathbb Z/p\mathbb Z$ and $\Delta_{G,\chi}$ vanishes over this field.
\begin{co} If $G$ is a finitely presented group possessing a homomorphism to 
$\z$ with kernel having infinite rational first Betti number then $G$ is large.
\end{co}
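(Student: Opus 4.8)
The plan is to deduce the corollary directly from Theorem 2.1 by showing that infinite rational first Betti number of $\mbox{ker }\chi$ forces $\Delta_{G,\chi}=0$. First I would recall from the proof of Theorem 2.1 that $H_1(\mbox{ker }\chi;\q)$ is a finitely presented module over the principal ideal domain $\q[t^{\pm 1}]$, and by the structure theorem it decomposes as a direct sum of cyclic modules $\q[t^{\pm 1}]/(d_i)$ together with, possibly, some free summands $\q[t^{\pm 1}]$. The key observation is a dichotomy: if there are no free summands, then every $d_i$ is a nonzero element of $\q[t^{\pm 1}]$, so each cyclic summand $\q[t^{\pm 1}]/(d_i)$ is a finite-dimensional $\q$-vector space (of dimension $\deg d_i$), and hence $H_1(\mbox{ker }\chi;\q)$ is finite-dimensional over $\q$; equivalently $\beta_1(\mbox{ker }\chi)$ (the rational one) is finite. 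Contrapositively, if the rational first Betti number of $\mbox{ker }\chi$ is infinite, then the decomposition must contain at least one free $\q[t^{\pm 1}]$-summand of rank $\geq 1$.

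Next I would invoke the computation already carried out in the proof of Theorem 2.1: putting the presentation matrix into canonical (Smith normal) form with diagonal entries $d_1,\ldots,d_k\in\q[t^{\pm 1}]$, the Alexander polynomial over $\q$ equals the product $d_1\cdots d_k$, and this product vanishes precisely when some $d_i=0$, which is exactly the condition that $H_1(\mbox{ker }\chi;\q)$ contains a free $\q[\z]$-module of rank at least $1$. Combining this with the dichotomy of the previous paragraph, infinite rational first Betti number of $\mbox{ker }\chi$ yields $\Delta_{G,\chi}=0$ over $\q$, hence over $\z$ by the remark in the proof of Theorem 2.1 that vanishing over $\z$ and over $\q$ are equivalent. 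Theorem 2.1 then applies and gives that $G$ is large.

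I do not expect any genuine obstacle here, since all the machinery is in place: the corollary is essentially a repackaging of the chain of equivalences established inside the proof of Theorem 2.1. The only point requiring a line of care is the elementary module-theoretic fact that a finitely generated torsion module over $\q[t^{\pm 1}]$ is finite-dimensional over $\q$ — which follows because $\q[t^{\pm 1}]$ is a PID, each elementary divisor $d_i$ is a unit times a genuine polynomial (a Laurent polynomial differs from a polynomial only by a unit $t^{\pm n}$), and $\q[t]/(p(t))$ has $\q$-dimension $\deg p$. With that noted, the argument is complete.
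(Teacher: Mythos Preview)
Your argument is correct and follows the same logical skeleton as the paper: show that infinite $\beta_1(\mbox{ker }\chi;\q)$ forces $\Delta_{G,\chi}=0$, then invoke Theorem 2.1. The difference is in how that implication is justified. The paper simply cites an external result (Lickorish, Theorem 6.17, or McMullen, Section 4) stating that $\beta_1(\mbox{ker }\chi;\q)$ equals the degree of $\Delta_{G,\chi}$, from which the implication is immediate. You instead give a self-contained argument, recycling the Smith normal form computation already carried out inside the proof of Theorem 2.1 and observing directly that a finitely generated torsion $\q[t^{\pm 1}]$-module is finite-dimensional over $\q$. Your route avoids the external citation and is marginally more informative for a reader who has just read Theorem 2.1; the paper's route is shorter and also records the sharper fact that the Betti number equals the degree of the polynomial, not merely that finiteness of one is equivalent to nonvanishing of the other.
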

\begin{proof}
We have by definition that
$\beta_1(\mbox{ker}\chi;\q)$ is the dimension of\\ 
$H_1(\mbox{ker }\chi;
\q)$ as a vector space over $\q$. It is also the degree of the Alexander
polynomial $\Delta_{G,\chi}$ (where
the degree of a Laurent polynomial in $t$ is the degree of the highest non-zero
power of $t$ minus the degree of the lowest) by \cite{lic} Theorem 6.17 or
\cite{mcm} Section 4. In particular
$\Delta_{G,\chi}=0$ if and only if $\beta_1(\mbox{ker }\chi;\q)$ is infinite, 
so this claim now follows directly from Theorem 2.1.
\end{proof}

Note: The Corollary is most definitely not true for all finitely generated 
groups; we do require a finite number of relators too, as can be seen by
the example of the restricted wreath product $\z\wr\z$. 

\section{Deficiency 1 groups}

The deficiency of a finite presentation is the number of generators minus the
number of relators and the deficiency $def(G)$ of a finitely presented group
$G$ is the maximum deficiency over all presentations. (It is bounded above
by $\beta_1(G)$ so is finite.) We know that groups of deficiency
at least 2 are large so it seems reasonable to ask whether we can use our 
criterion to obtain large groups with lower deficiencies, for instance 
deficiency 1. In fact this case turns out
to be a very fruitful choice, both from the point of view that calculating the
Alexander polynomial of a deficiency 1 group is more efficient than for lower
deficiencies, and because of the behaviour of deficiency in finite covers. 
Given a presentation for a group $G$ with $n$ generators and $m$ relators and
an index $i$ subgroup $H$ of $G$,
we can use Reidemeister-Schreier rewriting to obtain a presentation for 
$H$ of $G$ with $(n-1)i+1$ generators and $mi$ relators,
thus the deficiency of $H$ is at least $(def(G)-1)i+1$. So if
$def(G)=1$ then either $def(H)=1$ for all $H\leq_f G$ or $H$, and thus $G$,
is large anyway by \cite{bp}.

\begin{thm}
If $G$ is a group with a deficiency 1 presentation $\langle x_1,\ldots ,x_n|$
$r_1,\ldots ,r_{n-1}\rangle$ where one of the relators is of the form
$x_ix_jx_i^{-1}x_j^{-1}$ then $G$ is large if the subgroup of $ab(G)$ generated
by the images of $x_i$ and $x_j$ has infinite index.
\end{thm}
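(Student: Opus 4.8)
The strategy is to apply Theorem 2.1: I want to produce a surjection $\chi : G \to \z$ with $\Delta_{G,\chi} = 0$. The natural candidate for $\chi$ comes from the hypothesis — since $x_i$ and $x_j$ generate a subgroup of infinite index in $ab(G) \cong \z^{\beta_1(G)}$, the quotient $ab(G)/\langle \bar{x}_i, \bar{x}_j\rangle$ is infinite, so it surjects onto $\z$; composing $G \twoheadrightarrow ab(G) \twoheadrightarrow \z$ gives a homomorphism $\chi$ which kills both $x_i$ and $x_j$. The point of choosing this particular $\chi$ is that the relator $r = x_i x_j x_i^{-1} x_j^{-1}$ then becomes trivial in a strong sense when we compute the Alexander matrix.

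**Key steps.** First I would write down the Alexander matrix for $(G,\chi)$ via Fox derivatives: this is the $(n-1) \times (n-1)$ matrix (after deleting one column corresponding to a generator $t$ with $\chi(t)=1$) whose entries are the images under $\z[t^{\pm1}]$ of the Fox derivatives $\partial r_k / \partial x_\ell$, where a generator $x_\ell$ maps to $t^{\chi(x_\ell)}$. Since $\chi(x_i) = \chi(x_j) = 0$, both $x_i$ and $x_j$ map to $1 \in \z[t^{\pm1}]$ under the abelianization-twisted-by-$\chi$ map. Now I compute the Fox derivatives of the commutator relator $r = x_i x_j x_i^{-1} x_j^{-1}$: one has $\partial r/\partial x_i = 1 - x_i x_j x_i^{-1}$ and $\partial r/\partial x_j = x_i - x_i x_j x_i^{-1} x_j^{-1} = x_i - r$, and all other Fox derivatives of $r$ vanish. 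Applying $\chi$-abelianization, $x_i \mapsto 1$, $x_j \mapsto 1$, so $\partial r/\partial x_i \mapsto 1 - 1 = 0$, $\partial r/\partial x_j \mapsto 1 - 1 = 0$, and the remaining entries are $0$ as well. Hence the row of the Alexander matrix corresponding to $r$ is entirely zero.

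**Finishing.** A presentation matrix with a zero row has all its maximal minors equal to $0$ (since every maximal square submatrix either contains the zero row, giving determinant $0$, or — if the zero row is among the deleted rows — but here the matrix is square after deleting the $t$-column, so in fact we are looking at an $(n-1)\times(n-1)$ matrix and the only maximal minor is its determinant, which is $0$ because of the zero row). Therefore the first elementary ideal is zero, so $\Delta_{G,\chi} = 0$, and Theorem 2.1 gives that $G$ is large. I should double-check the degenerate possibility $n = 1$: then $G$ would be cyclic and the hypothesis about $x_i, x_j$ is vacuous or contradictory, so we may assume $n \ge 2$ and the deleted-column bookkeeping goes through. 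I should also make sure that one can indeed choose the generator $t$ whose column is deleted to be distinct from $x_i$ and $x_j$ — if $\beta_1(G) \ge 1$ and $x_i, x_j$ generate an infinite-index subgroup, there is at least one further direction in $ab(G)$, and the Reidemeister–Schreier / Fox-calculus setup lets us delete any column, so this is not an obstruction.

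**Main obstacle.** The only real subtlety is making the Fox-calculus computation airtight: confirming that the $\chi$-twisted abelianization of each Fox derivative of the commutator relator vanishes precisely because $\chi(x_i) = \chi(x_j) = 0$ (so that the variables occurring in those derivatives, namely conjugates of $x_i$ and $x_j$, all map to $1$), and carefully handling the column-deletion convention so that the resulting presentation matrix of $H_1(\ker\chi;\z)$ as a $\z[t^{\pm1}]$-module genuinely has a zero row. Once that is in place the rest is immediate from Theorem 2.1.
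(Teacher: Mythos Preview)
Your proof is correct and follows essentially the same route as the paper's: construct a surjection $\chi:G\to\z$ killing $x_i,x_j$, observe that the commutator relator contributes a zero row/column to the square $(n-1)\times(n-1)$ presentation matrix of the Alexander module, conclude $\Delta_{G,\chi}=0$, and invoke Theorem~2.1. The only difference is cosmetic: the paper computes the presentation matrix via Reidemeister--Schreier rewriting (and first changes the free basis of $F_n$ so that some $y_3$ has $\chi(y_3)=1$), whereas you compute it via Fox derivatives---these are standard equivalent ways of producing the same matrix, and your observation that the whole Fox row of $r$ vanishes actually sidesteps the need for the basis change, since then every maximal minor of the full $(n-1)\times n$ Jacobian is zero regardless of which column is deleted.
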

\begin{proof}
Without loss of generality we can reorder the generators and so we can
assume we have the relator $x_1x_2x_1^{-1}x_2^{-1}$. As $ab(G)$ is a free
abelian group $\z^{\beta_1(G)}$ of finite rank, we have that $x_1$ and $x_2$
generate a free abelian subgroup of strictly smaller rank. Therefore there
must exist a surjective homomorphism $\chi:G\rightarrow\z$ with $x_1$ and
$x_2$ in the kernel, as well as coprime integers $k_3,\ldots ,k_n$ such
that $k_3\chi(x_3)+\ldots +k_n\chi(x_n)=1$. Therefore there exists a matrix
$M\in GL(n-2,\z)$ such that its first column is $(k_3,\ldots ,k_n)$ and
this gives rise to an automorphism $\beta$ of $\z^{n-2}$ sending the standard
basis $e_3,\ldots ,e_n$ (where we think of $e_i$ as the image of the generator
$x_i$ in the abelianisation $\z^n$ of the free group $F_n$) to a new basis
$b_3,\ldots ,b_n$. Now by \cite{ls} I.4.4, we have an automorphism $\alpha$
of $F_n$ that fixes $x_1,x_2$ and induces $\beta$ on $e_3,\ldots ,e_n$. On
rewriting our presentation in terms of $y_1=\alpha(x_1),\ldots
,y_n=\alpha(x_n)$, we now have $\chi(y_3)=1$ and so we can regard 
$H_1(\mbox{ker }\chi;\z)$ as a $\z[t^{\pm 1}]$ module where $t$ is equal to
$y_3$ and acts by conjugation.
We can obtain a presentation matrix $P$ for this module by performing
Reidemeister-Schreier rewriting on $G$ using $\{t^j:j\in\z\}$ as a Schreier
transversal. We find that our original relation $x_1x_2x_1^{-1}x_2^{-1}$
becomes the set of group relations $x_{1,j}x_{2,j}x_{1,j}^{-1}x_{2,j}^{-1}$
where $x_{1,j}=t^jx_1t^{-j}$ and $x_{2,j}=t^jx_2t^{-j}$. To obtain the
equivalent relation for $P$, we abelianise and regard each of these group 
relations as the same module relation multiplied by powers of $t$. But this 
becomes zero, thus giving us a zero column in $P$.

The crucial point about the group presentation having deficiency one is that
this makes $P$ a square matrix (of size $n-1$). This means that the Alexander
polynomial $\Delta_{G,\chi}$ is merely the determinant of $P$, which must be
zero owing to the zero column, hence we have largeness of $G$ by Theorem 2.1.
\end{proof}

\begin{co}
If $G=\langle x_1,\ldots , x_n|r_1,\dots ,r_{n-1}\rangle$ has a deficiency 1
presentation with a relator $r_1=x_1x_2x_1^{-1}x_2^{-1}$ and the abelianisation
$\overline{G}=\z\times\z\times\z/m\z$ for $m\geq 2$ then $G$ is large.
\end{co}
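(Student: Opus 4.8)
The plan is to deduce this from Theorem 3.1, if necessary after passing to a finite index subgroup whose first Betti number has grown. Since $\overline{G}=\z\times\z\times\z/m\z$ we have $\beta_1(G)=2$ and $ab(G)=\z\times\z$. If the images of $x_1$ and $x_2$ already generate a subgroup of infinite index in $ab(G)$ then Theorem 3.1 applies at once and $G$ is large; so the real work is the case when $\langle\overline{x_1},\overline{x_2}\rangle$ has finite index in $ab(G)$. In that case, being a $2$-generator abelian group which maps onto a finite index subgroup of $\z\times\z$, the group $\langle\overline{x_1},\overline{x_2}\rangle$ is free abelian of rank $2$, and therefore meets the torsion subgroup $\z/m\z$ of $\overline{G}$ trivially.

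I would next produce a suitable finite quotient of $G$. Pick a prime $p$ dividing $m$ (this is where $m\geq 2$ is used) and put $\overline{Q}:=\overline{G}/\langle\overline{x_1},\overline{x_2}\rangle$, a finite abelian group. A short index count, which crucially uses that $\langle\overline{x_1},\overline{x_2}\rangle$ avoids the torsion of $\overline{G}$, shows that $|\overline{Q}|$ is $m$ times the index $[ab(G):\langle\overline{x_1},\overline{x_2}\rangle]$; hence $p$ divides $|\overline{Q}|$ and $\overline{Q}$ has a quotient $\z/p\z$. Composing $G\rightarrow\overline{G}\rightarrow\overline{Q}\rightarrow\z/p\z$ then gives a surjection $\theta:G\rightarrow\z/p\z$ whose kernel $H$ (of index $p$) contains both $x_1$ and $x_2$.

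The crux is to show $\beta_1(H)\geq 3$. There is a generator $x_j$ with $\theta(x_j)\neq 0$, and since $\theta(x_1)=\theta(x_2)=0$ we have $x_j\neq x_1,x_2$; take $T=\{1,x_j,\ldots,x_j^{p-1}\}$ as a Schreier transversal for $H$ in $G$ and run Reidemeister-Schreier rewriting on the given presentation of $G$. This gives a deficiency $1$ presentation of $H$ with $(n-1)p+1$ generators — which include $x_1$ and $x_2$, since $x_1,x_2\notin T$ — and $(n-1)p$ relators, namely the rewrites of $tr_kt^{-1}$ for $t\in T$ and $1\leq k\leq n-1$. The key observation is that, as $r_1=x_1x_2x_1^{-1}x_2^{-1}$ with $x_1,x_2\in H$, each of the $p$ relators $tr_1t^{-1}=[tx_1t^{-1},tx_2t^{-1}]$ is a commutator of elements of $H$ and so becomes trivial in $\overline{H}$. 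Hence $\overline{H}$ is a quotient of the free abelian group on the $(n-1)p+1$ generators by only the $(n-2)p$ relations coming from $r_2,\ldots,r_{n-1}$, forcing $\beta_1(H)\geq (n-1)p+1-(n-2)p=p+1\geq 3$. Finally, the relator coming from $t=1$ and $r_1$ is literally $x_1x_2x_1^{-1}x_2^{-1}$, a commutator of two generators of this presentation of $H$, so Theorem 3.1 applies to $H$: the images of $x_1,x_2$ span a subgroup of $ab(H)=\z^{\beta_1(H)}$ of rank at most $2<\beta_1(H)$, hence of infinite index, so $H$ is large; as $H$ has finite index in $G$, this makes $G$ large.

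The step I expect to be the main obstacle is the bound $\beta_1(H)\geq 3$, which rests entirely on the observation that, because $x_1$ and $x_2$ lie in $H$, every conjugate of the commutator relator $r_1$ rewrites to an honest commutator of Reidemeister-Schreier generators and so contributes nothing to $\overline{H}$ — this is the same mechanism that produces the zero column in the proof of Theorem 3.1. The remaining points needing care are the index computation for $|\overline{Q}|$ (where the torsion-freeness of $\langle\overline{x_1},\overline{x_2}\rangle$ is essential) and the mild bookkeeping that ensures $x_1$ and $x_2$ survive as generators of the new presentation.
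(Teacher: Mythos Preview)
Your proof is correct and follows essentially the same route as the paper: reduce via Theorem 3.1 to the case where $\langle\overline{x_1},\overline{x_2}\rangle$ has finite index in $\overline{G}$ (hence is $\cong\z\times\z$), pass to a finite cyclic quotient $\theta$ with $x_1,x_2$ in the kernel, run Reidemeister--Schreier, observe that all $j$ rewrites of $r_1$ are commutators of the new generators and thus die in the abelianisation, giving $\beta_1(\ker\theta)\geq j+1$, and then apply Theorem 3.1 to $\ker\theta$. The one pleasant simplification in your version is taking the index to be a prime $p$: this lets you pick any $x_j$ with $\theta(x_j)\neq 0$ as the Schreier generator, whereas the paper allows a general $j\geq 2$ and so has to invoke an automorphism of $F_n$ fixing $x_1,x_2$ to arrange that some generator hits a generator of $\z/j\z$.
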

\begin{proof}
We are done by Theorem 3.1 unless the images $\overline{x_1},\overline{x_2}$ 
in $\overline{G}$ 
generate a finite index subgroup $S$ of $\overline{G}$, but if so then $S$
must have $\z$-rank equal to that of $\overline{G}$, which is 2. However $S$
is generated by two elements so in this case $S$ can 
only be isomorphic to $\z\times\z$. Now take a homomorphism $\theta$ from 
$G$ onto
$\z/j\z$ for some $j\geq 2$ such that $S$ is in the kernel. We require another
generator $g\in\{x_3,\ldots ,x_n\}$ such that $\theta(g)$ generates 
Im $\theta$ but this can be achieved by taking an appropriate automorphism
of the free group of rank $n$ that fixes $x_1$ and $x_2$, just as in Theorem
3.1. We now perform 
Reidemeister-Schreier rewriting to obtain from our original presentation of $G$
a deficiency 1 presentation for ker $\theta$ consisting of $nj+1$ generators
and $nj$ relators. We have $g^i$, $0\leq i<j$ as a Schreier transversal 
for ker $\theta$ in $G$ and on setting $x_{1,i}=g^ix_1g^{-i}$ and
$x_{2,i}=g^ix_2g^{-i}$, which will all be amongst the
generators for our presentation of ker $\theta$ given by this process (because
$x_1,x_2\in S\leq\mbox{ker }\theta$), 
our original relator $r_1$ gives rise to  
$j$ relators $x_{1,i}x_{2,i}x_{1,i}^{-1}x_{2,i}^{-1}$ in the presentation
for our subgroup. As these disappear when we abelianise, we see that 
$\beta_1(\mbox{ker }\theta)$ is at least $j+1$ and we are done by Theorem 3.1.
\end{proof}

It might be felt that requiring two generators to commute in a deficiency 1
presentation is rather restrictive but most of
the rest of our results are based on
finding deficiency 1 groups $G$ which have a finite index subgroup $H$
possessing such a presentation. This means $\beta_1(H)\geq 2$ and Corollary
3.2 will apply unless the abelianisation $\overline{H}=\z\times\z$. We now
discuss a generalisation of the property of being residually finite which 
allows us to avoid this exception.

Recall that a group $G$ is residually finite if the intersection $R_G$ over all
the finite index subgroups $F\leq_f G$ is the trivial group $I$. Although this
works perfectly well as a general definition, it is most useful when $G$ is
finitely generated and that will be our assumption here. Our
motivation for the next definition is to ask: how badly can a group fail to be
residually finite and what is the worst possible case? The first answer that
would come to mind is when $G$ ($\neq I$) has no proper finite index subgroups
at all, but we are dealing with groups possessing positive first Betti
number and hence infinitely many subgroups of finite index. By noting that
elements outside the commutator subgroup $G'$ cannot be in $R_G$, we obtain
our condition.
\begin{defn}
We say that the finitely generated group $G$ is {\bf residually abelianised} if
\[G'=\bigcap_{F\leq_f G}F.\]
If further $G$ is non-abelian then we say it is {\bf NARA (non-abelian 
residually abelianised)}.
\end{defn}
Note that by excluding $G$ being abelian, we have that $G$ residually finite
implies $G$ is not NARA. The definition has many equivalent forms but the 
general idea is that a NARA group cannot be distinguished 
from its abelianisation if one only uses standard information about its finite
index subgroups.
\begin{prop}
Let $G$ be finitely generated and non-abelian with commutator subgroup $G'$,
abelianisation $\overline{G}=G/G'$ and let $R_G$ be the intersection of the
finite index subgroups of $G$. The following are equivalent:\\
(i) $G$ is NARA.\\
(ii) $G$ has no non-abelian finite quotient.\\
(iii) $G$ has no non-abelian residually finite quotient.\\
(iv) If $a_n(G)$ denotes the number of finite index subgroups of $G$ having
index $n$ then $a_n(G)=a_n(\overline{G})$ for all $n$.\\
(v) For all $F\leq_f G$ we have $F'=G'$.\\
(vi) For all $F\leq_f G$ we have $F\cap G'=G'$.\\
(vii) For all $F\leq_f G$ we have $F'=F\cap G'$.\\
\end{prop}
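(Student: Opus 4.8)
The plan is to prove the seven conditions equivalent by establishing a cycle of implications together with a couple of cross-links, exploiting the elementary fact that $R_G$, being an intersection of finite-index subgroups, is itself a (normal) subgroup contained in every finite-index subgroup, and that $G/R_G$ is residually finite by construction. I would first record the easy observation that the abelianisation $\overline{G}=G/G'$ is finitely generated abelian, hence residually finite, so $R_G\subseteq G'$ always holds; thus (i), the statement $G'=R_G$, is equivalent to the reverse inclusion $G'\subseteq R_G$, i.e. $G'$ lies in every finite-index subgroup.

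Next I would run through the implications in a convenient order. For (i)$\Rightarrow$(ii): if $N\trianglelefteq_f G$ then $G'\subseteq N$, so $G/N$ is a quotient of $\overline{G}$ and hence abelian; this also gives (i)$\Rightarrow$(iv) since the finite-index subgroups of $G$ are exactly the preimages of those of $\overline{G}$ (they all contain $G'$), and the index is preserved, so the subgroup-counting functions agree. For (ii)$\Rightarrow$(iii): a residually finite quotient $Q$ of $G$ that is non-abelian would have a non-abelian finite quotient (take $q\in Q'$, $q\neq 1$, separate it from $1$ by a finite quotient, whose image still contains the image of $q$ in its commutator subgroup), contradicting (ii); and (iii)$\Rightarrow$(i) because $G/R_G$ is a residually finite quotient, so by (iii) it is abelian, forcing $G'\subseteq R_G$. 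That closes the loop (i)$\Leftrightarrow$(ii)$\Leftrightarrow$(iii), and (i)$\Rightarrow$(iv) is done; for (iv)$\Rightarrow$(ii) I would argue contrapositively: a non-abelian finite quotient $G/N$ has more than one subgroup structure than $\overline{G}/\overline{N}$ — more precisely, the normal subgroup $N$ with non-abelian quotient shows $a_n(G)>a_n(\overline{G})$ for a suitable $n$, since $G$ has the finite-index subgroup $N$ (of index $|G/N|$) whose quotient is not abelian while every index-$|G/N|$ "abelian-type" subgroup is accounted for by $\overline{G}$; one should be a little careful here, so I would instead simply note that (iv) with $n$ ranging over all values forces every finite-index subgroup to contain $G'$ (otherwise some index-$n$ subgroup of $G$ has no counterpart among the $\geq G'$ subgroups counted by $a_n(\overline G)$, making $a_n(G)>a_n(\overline G)$), which is exactly (i).

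For the subgroup-theoretic conditions (v), (vi), (vii), the key point is the elementary inclusions $F'\subseteq F\cap G'\subseteq G'$ valid for any subgroup $F$, so (v)$\Leftrightarrow$(vii) is immediate once $F\cap G'=G'$ is known, and in fact each of (v),(vi),(vii) is equivalent to $G'\subseteq F$ for all $F\leq_f G$, i.e. to (i): condition (vi) says literally $G'\subseteq F$; condition (v) gives $G'=F'\subseteq F$; and conversely $G'\subseteq F$ makes $F\cap G'=G'$ (so (vi)) and, applying the same to $F$ in place of $G$ once one checks $F$ is also NARA — which follows since $F'\supseteq$ (the relevant intersection) — yields $F'=G'$. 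I would phrase this last step cleanly by observing: if $G'\subseteq F$ for all $F\leq_f G$, then for a fixed $F_0\leq_f G$ and any $F_1\leq_f F_0$ we have $F_1\leq_f G$ hence $G'\subseteq F_1$, so $G'\subseteq R_{F_0}\subseteq F_0'$ (the last inclusion since $F_0/F_0'$ is residually finite), and combined with $F_0'\subseteq G'$ this gives $F_0'=G'$, which is (v).

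The main obstacle I anticipate is handling (iv) carefully — translating the purely numerical condition $a_n(G)=a_n(\overline G)$ into the structural statement that all finite-index subgroups contain $G'$. The clean way is: there is always an injection from {index-$n$ subgroups of $\overline G$} into {index-$n$ subgroups of $G$} via preimage, and this is a bijection precisely when every index-$n$ subgroup of $G$ contains $G'$; so equality of the counts for every $n$ is equivalent to (i). Everything else is a routine diagram-chase through quotients, and I would present it as a short ring of implications (i)$\Rightarrow$(ii)$\Rightarrow$(iii)$\Rightarrow$(i), then (i)$\Leftrightarrow$(iv), then (i)$\Leftrightarrow$(vi)$\Leftrightarrow$(v)$\Leftrightarrow$(vii), citing the standard fact that finitely generated abelian groups are residually finite.
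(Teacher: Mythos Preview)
Your cycle through (i)--(iv) and your treatment of (i)$\Leftrightarrow$(v)$\Leftrightarrow$(vi) are correct and match the paper's approach essentially line for line (the paper also uses $R_F=R_G=G'$ together with $R_F\subseteq F'$ to get (i)$\Rightarrow$(v), which is your $R_{F_0}$ argument). However, there is a genuine gap: you never establish (vii)$\Rightarrow$(i). You assert that ``each of (v),(vi),(vii) is equivalent to $G'\subseteq F$ for all $F\leq_f G$'', but your arguments only cover (v) and (vi); for (vii) you only observe that (v)$\Leftrightarrow$(vii) \emph{once (vi) is known}, which is circular since (vii)$\Rightarrow$(vi) is precisely the missing step.

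This implication is not a formality. Condition (vii) says $F'=F\cap G'$, and you need to deduce $G'\subseteq F$; there is no direct inclusion-chasing that does this. The paper proves the contrapositive: if (i) fails, pick $g\in G'$ with $g\notin F$ for some $F\leq_f G$, pass to a normal $N\trianglelefteq_f G$ with $N\leq F$ (so still $g\notin N$), and set $H=N\langle g\rangle\leq_f G$. Then $g\in H\cap G'$, but $H/N$ is cyclic, hence abelian, so $H'\leq N$ and therefore $g\notin H'$. Thus $H'\subsetneq H\cap G'$, violating (vii). This construction --- manufacturing a finite-index subgroup $H$ that contains $g$ yet admits an abelian quotient detecting $g$ --- is the one non-obvious idea in the proof, and it is absent from your proposal.
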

\begin{proof}
The equivalence of (i) with (ii) is immediate on dropping
down to a finite index normal subgroup. We have (iii) implies (ii) and (i)
implies (iii) as any residually finite image of $G$ must factor through
$G/R_G$. As for (iv), this is just using the
index preserving correspondence between the subgroups of $\overline{G}$ and 
the subgroups of $G$ containing $G'$.

As for the rest, we have that $F'\leq F\cap G'\leq G'$ whenever $F$ is a 
subgroup of $G$. If (i) holds for $G$ with $F$ a finite index subgroup then
$R_F=R_G=G'$ but $R_F$ is inside $F'$ so $F'$ and $G'$ are equal, giving (v). 
This
immediately implies (vi) and (vii) so we just require that these two in turn 
imply
(i). This is obvious for (vi) and for (vii) we can adopt the proof of
\cite{lr} Theorem 4.0.8 which states that if $\Gamma$ is a residually finite
group then for each of its (nontrivial) cyclic subgroups there exists a
homomorphism onto another (nontrivial) cyclic group which can be extended
to a finite index subgroup of $\Gamma$.
If (i) fails then take $F\leq_f G$ and $g\in G'$ but $g\notin F$.
Dropping down to $N\leq F$ with
$N\unlhd_fG$, we have $H=N\langle g\rangle\leq_f G$ and
$H/N\cong\langle g\rangle/(N\cap\langle g\rangle)$. Thus $g\notin H'$ because
by being outside $N$ it survives under a homomorphism from $H$ to an abelian
group. But $g$ is certainly in $H\cap G'$.
\end{proof}

The importance of condition (vii) holding for $G$ 
is that we fail to pick up extra abelianisation in finite covers 
$F\leq_f G$ since $F/F'$ is just $F/(F\cap G')\cong FG'/G'\leq_fG/G'$. 
In particular $\beta_1(F)=\beta_1(G)$ so $G$ is not large.

\begin{ex}
\end{ex}
(i) The Thompson group $T$ is NARA. This group has a 2 generator
2 relator presentation with abelianisation $\z\times\z$ and its commutator
subgroup $T'$ has no proper finite index subgroups as $T'$ is infinite and
simple; see \cite{cfp}.
But for $F\leq_fT$ we have $F\cap T'\leq_fT'$ thus $T'\leq F$.\\
(ii) If $G$ is infinite but has no proper finite index
subgroups then $G$ is NARA. Moreover for any such $G$ and any
residually abelianised group $A$ we have $\Gamma=G*A$ is 
NARA because if $N\unlhd_f\Gamma$ then $N\cap G\unlhd_f G$ so $G\leq N$.
This implies that the normal closure $C$ of $G$ is in $N$ so $\Gamma/N$ must
be abelian as it is a finite quotient of $\Gamma/C\cong A$. 
A famous example that will do for $G$ is the Higman group $H$
with 4 generators and 4 relators as introduced in \cite{hig}. It has 
$\beta_1(G)=0$ so its deficiency must be zero. Thus $H*H$ is NARA so 
it too has no proper finite index subgroups, since it is 
infinite and equals its own commutator subgroup. By repeating this construction
we obtain $H*\ldots *H$ using $n$ copies of $H$ which gives us examples of
NARA groups $G_n$ which can have arbitrarily many generators (by the 
Grushko-Neumann theorem) and with $\beta_1(G)=0$ and deficiency zero.
In order to obtain examples of deficiency 1 NARA groups we can take the
free product of $G_n$ with $\z$ or $\z\times\z$ so that the resulting groups
need arbitrarily many generators and have their first Betti number equal
to 1 or 2. Of course there are no groups of deficiency two or higher which
are NARA because they are all large.\\
(iii) There are 1 relator groups which are NARA: the first example
dates back to a short paper \cite{ba} of G.\,Baumslag in 1969 entitled 
``A non-cyclic one-relator group all of whose finite quotients are cyclic'' 
with the group in question being
\[\langle a,b|a=a^{-1}b^{-1}a^{-1}bab^{-1}ab\rangle.\]
However Baumslag-Solitar groups are not NARA, as can be seen by taking
quotients onto dihedral groups.

In terms of its wide application, the following is our main result on largeness
of deficiency 1 groups.
\begin{thm}
If $G$ has a deficiency 1 presentation $\langle F_n|R\rangle$ where one of the
relators is a commutator in $F_n$ then exactly one of these occurs:\\
(i) $G=\z\times\z$.\\
(ii) $G$ is NARA with abelianisation $\z\times\z$.\\
(iii) $G$ is large.\\
In particular if there exists $H\leq_f G$ with
$\overline{H}\neq\z\times\z$ then $G$ is large.
\end{thm}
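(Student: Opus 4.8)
The plan is to combine the single-commutator relator hypothesis with the dichotomy already extracted in Theorem 3.1 and Corollary 3.2, and to handle the borderline case using the NARA machinery of Proposition 3.4. Say the deficiency 1 presentation is $\langle x_1,\dots,x_n|r_1,\dots,r_{n-1}\rangle$ with $r_1=[x_i,x_j]$ a commutator in $F_n$; after reordering we may take $r_1=[x_1,x_2]$. Let $A\leq ab(G)=\z^{\beta_1(G)}$ be the subgroup generated by the images of $x_1,x_2$. Theorem 3.1 immediately gives largeness (case (iii)) when $A$ has infinite index in $ab(G)$, so from now on $A$ has finite index, forcing $\beta_1(G)\in\{1,2\}$ since $A$ is $2$-generated. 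If $\beta_1(G)=1$ then $A$ has rank $1$, so one of $\overline{x_1},\overline{x_2}$ is a power of the other in $ab(G)$; I would then argue that $G$ is generated by $x_2,x_3,\dots,x_n$ together with the single defining consequence that $x_1$ commutes with $x_2$ becomes redundant information — more carefully, I would show one can Tietze-transform away $x_1$ (or see that the commutator relator is then a consequence of the abelianisation data) to conclude $G$ is a $1$-relator-free situation, but in fact the cleanest route is: if $\beta_1(G)=1$ the presentation has deficiency $1$ with $n$ generators and $n-1$ relators one of which is a commutator, and I want to rule this out or push it into (iii); I expect the honest statement is that $\beta_1(G)=1$ with a commutator relator still can fail to be large only via passing to covers, so I would instead go to a finite cover.

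The main case is $\beta_1(G)=2$, so $A$ is a finite-index subgroup of $\z^2$, hence $A\cong\z\times\z$. The key step is the final clause: if some $H\leq_f G$ has $\overline{H}\neq\z\times\z$, then $G$ is large. Here I would take a finite-index $H$ through which a suitable surjection $G\to\z/j\z$ (with $x_1,x_2$ in the kernel, arranged as in Corollary 3.2 by applying an automorphism of $F_n$ fixing $x_1,x_2$) factors. Reidemeister–Schreier rewriting produces a deficiency $1$ presentation of $H$ in which the commutator relator $[x_1,x_2]$ lifts to $j$ commutator relators $[x_{1,i},x_{2,i}]$, $0\le i<j$, all among the generators. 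Thus $H$ itself has a deficiency $1$ presentation with a commutator relator, and if additionally $\overline{H}\neq\z\times\z$ then by Corollary 3.2 (when $\overline H$ has a $\z/m\z$ summand with the commuting pair generating a $\z\times\z$) or by Theorem 3.1 (when the commuting pair has infinite index in $ab(H)$, e.g. when $\beta_1(H)\ge 3$) we get that $H$, and hence $G$, is large. The bookkeeping that $\overline H\neq\z\times\z$ splits exactly into "$\beta_1(H)\ge 3$" or "$\beta_1(H)=2$ with nontrivial torsion" is the step to carry out carefully, because one must check the commuting pair $x_1,x_2$ still generates a $\z\times\z$ inside $ab(H)$ in the torsion case — this follows since $x_1,x_2$ commute freely in $H$ (they generate a free abelian subgroup, as in Theorem 3.1's argument) and their images span a rank-$2$ subgroup because $A$ already had rank $2$ in $ab(G)$ and $ab(H)\to ab(G)$ has finite-index image.

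It remains to see that the three alternatives (i), (ii), (iii) are exhaustive and mutually exclusive once we are in the remaining situation: $\beta_1(G)=2$ and $\overline H=\z\times\z$ for every $H\le_f G$. Mutual exclusivity is clear — $\z\times\z$ is not large and is not non-abelian-with-no-nonabelian-finite-quotient-yet-not-equal-to-$\z^2$ in the relevant sense, and a NARA group is by the remark after Proposition 3.4 not large. For exhaustiveness: if $\overline H=\z\times\z$ for all $H\le_f G$, then in particular $\beta_1(F)=\beta_1(G)=2$ and $F/F'$ has no torsion for all $F\le_f G$, which is condition (iv)/(v)-type data; I would invoke Proposition 3.4 to conclude $G$ is NARA unless $G$ is abelian. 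If $G$ is abelian then $G=\overline G=\z\times\z$, giving (i); otherwise $G$ is NARA with abelianisation $\z\times\z$, giving (ii). The one genuine gap to close carefully is the $\beta_1(G)=1$ case from the first paragraph: I would dispose of it by passing to a finite cover as well — if $\beta_1(G)=1$ one shows some $H\le_f G$ has $\beta_1(H)\ge 2$ (using that the commuting pair, once one of $x_1,x_2$ is trivial or torsion in $ab(G)$, still contributes a free $\z[t^{\pm1}]$-summand after unwrapping a $\z/p$-cover, exactly the mechanism of Theorem 3.1's zero column), landing back in the $\beta_1\ge 2$ analysis — and conclude largeness there. The main obstacle is this uniform handling of covers: ensuring in every sub-case that the lifted commutator relators genuinely force either a zero Alexander polynomial (Theorem 2.1 via Theorem 3.1) or the Corollary 3.2 hypothesis, rather than merely reproducing a $\z\times\z$ abelianisation.
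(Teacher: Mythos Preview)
Your overall architecture is right: reduce to $\overline{G}=\z\times\z$, use Proposition 3.4 to get (i) or (ii) when every finite-index $\overline{H}=\z\times\z$, and otherwise exploit an $H\leq_fG$ with $\overline{H}\neq\z\times\z$. Two minor points first: the hypothesis ``a commutator in $F_n$'' means $[u,v]$ for arbitrary $u,v\in F_n$, not a commutator of two of the $x_i$; the paper fixes this by adjoining $u,v$ as generators together with their defining relations, which preserves deficiency. Second, your worry about $\beta_1(G)=1$ is unfounded: since the commutator relator dies in abelianisation, $\overline{G}$ is a quotient of $\z^n$ by only $n-2$ relations, so $\beta_1(G)\geq 2$ automatically.

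The genuine gap is in your proof of the final clause. You are handed an arbitrary $H\leq_f G$ with $\overline{H}\neq\z\times\z$, and you want a deficiency~1 presentation of $H$ in which a relator is a commutator of two generators, so that Theorem 3.1 or Corollary 3.2 applies. But Reidemeister--Schreier only turns $[x_1,x_2]$ into commutators $[x_{1,i},x_{2,i}]$ when the transversal elements conjugate $x_1$ and $x_2$ back into $H$ --- i.e.\ essentially when $x_1,x_2\in H$. A general finite-index $H$ need not contain $x_1$ or $x_2$, and then the rewritten relators coming from $[x_1,x_2]$ are not commutators of generators at all. So there is no reason the given $H$ admits the presentation you need. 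Your sentence about ``a surjection $G\to\z/j\z$ with $x_1,x_2$ in the kernel'' is pulling in the opposite direction: those kernels \emph{do} contain $x_1,x_2$ and hence do get commutator relators, but they are not the $H$ you were given, and there is no reason their abelianisation differs from $\z\times\z$.

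This is exactly the difficulty the paper's proof is built to overcome. It does not try to equip the given witness $L$ (with $\overline{L}\neq\z\times\z$) with a commutator-relator presentation. Instead it constructs, in stages, a different finite-index subgroup that has \emph{both} properties at once: first the preimage of $\langle\overline{u},\overline{v}\rangle$, then a carefully chosen $N=\langle H',u^k,v^l\rangle$ with $u^k,v^l\in L$, rewritten in two steps so that $[u,v]$ becomes $[u^k,v^l]$, and finally $L\cap N$. The point is that $u^k,v^l$ lie in $L\cap N$, so they survive as generators with their commutator as a relator, while the element $\gamma\in G'\setminus L'$ produced by the failure of NARA lies in $L\cap N$ as well and forces $\overline{L\cap N}\neq\z\times\z$. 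You have correctly identified that ``uniform handling of covers'' is the obstacle; what is missing is this bridging construction that simultaneously controls the presentation and the abelianisation.
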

\begin{proof} If our relator $r=uvUV$ for $u,v$ words in the generators for
$F_n$ then we can regard $r$ as the commutator of two generators simply by
adding $u$ and $v$ to the generators and their definitions to the relators,
noting that this does not change the deficiency.
We must have $\beta_1(G)\geq 2$ and if $\beta_1(G)\geq 3$ (or if the subgroup
generated by the images of $u$ and $v$ has $\z$-rank less than 2) then $G$ is
large by Theorem 3.1. Moreover if $\overline{G}$ has non-trivial torsion then
we have largeness by Corollary 3.2 so the only case in which the given
presentation for $G$ does not show largeness is when $\overline{G}=\z 
\times\z$ with the subgroup $S=\langle \overline{u},\overline{v}\rangle$
(where $\overline{u}$ and $\overline{v}$ are the images of $u$ and $v$ in
$\overline{G}$) being of finite index. If neither (i) nor (ii) are true then
Proposition 3.4 tells us that none of the given seven conditions hold for $G$, 
so the failure of (vii) means that there is a subgroup $L\leq_f G$
with $\gamma$ in $L\cap G'$ but not in $L'$. Consequently $\overline{L}=L/L'$
is an abelian group which surjects to $L/(L\cap G')$ with $\gamma$ in the 
kernel. But $L/(L\cap G')$ is isomorphic to $LG'/G'$ which is a finite index
subgroup of $\overline{G}$ and thus is equal to $\z\times\z$. As this is a
Hopfian group, we conclude that $\beta_1(L)\geq 2$ but $\overline{L}\neq\z
\times\z$. We know that $L$ also has a deficiency 1 presentation which we can
obtain from $G$ by rewriting and if one of the relations in such a presentation
for $L$ were a commutator then we could conclude by Theorem 3.1 or Corollary
3.2 applied to this presentation that $L$, and hence $G$, is large. In fact
although we cannot guarantee this, we now show that there is a finite index
subgroup of $L$ which has such a presentation, along with the necessary
abelianisation. We do this by keeping track of what happens to our original
relator $r$ when we perform Reidemeister-Schreier rewriting
on dropping to a finite index subgroup. The process of rewriting for a subgroup
$H$ in $G=\langle g_1,\ldots ,g_n\rangle$ involves taking a Schreier
transversal $T$ to obtain a generating set for $H$ of the form $tg_i(\overline{
tg_i})^{-1}$, where $t\in T$ and $\overline{x}$ is the element of $T$ in the
same coset as $x$. In particular by taking $t$ equal to the identity we have
that if a generator $g_i$ of $G$ is in $H$ then it becomes a generator of $H$.
Moreover as the relators for $H$ are obtained by expressing the relators 
$tr_jt^{-1}$ in terms of these generators, any relator made up solely from 
those $g_i$ which are contained in $H$  will remain unchanged
in the presentation for $H$.

First we take $H$ to be the normal
subgroup of finite index in $G$ which is the inverse image of $\langle
\overline{u},\overline{v}\rangle$ under the abelianisation map from $G$ to
$\overline{G}$. Then the deficiency 1 presentation for $H$ has $r$ as one
of its relators thus we have largeness for $H$ and $G$ unless
$\overline{H}=\z\times\z$. However if so then we must have  $H'=H\cap G'$. 
This is because otherwise we
have a surjective but non-injective homomorphism from $H/H'$ to $H/(H\cap G')
\cong\z\times\z$, thus we can use Hopficity again. Moreover as $G'\leq H$ we 
get $H'=G'$.

Let $k,l$ be the minimum positive integers such that $a=u^k$ and $b=v^l$
are in $L$. We set $N=\langle H',a,b\rangle$ which is a finite index normal
subgroup of $H$ and of $G$, with $G/N$ abelian. We rewrite for $N$ in $H$ 
in two stages; first we drop to the subgroup
with exponent sum of $u$ equal to 0 mod $k$ and rewrite using the
transversal $u^i, 0\leq i<k$, and then we do the same with $v$. In both of
these stages $a$ and $b$ will be amongst the generators for
$N$ and our relator $uvUV$ in $H$ gives rise to a relator $ava^{-1}v^{-1}$ 
after the first rewrite, and then this becomes $abAB$. Thus we have $G'\leq N
\unlhd_f G$ with $N$ having a deficiency 1 presentation which includes 
generators $a,b$ and the relator $abAB$. 

Finally we go from $N$ to the subgroup $L\cap N\leq_f G$ which on rewriting
will keep $a$ and $b$ because they are generators in the presentation for $N$
which also lie in $L\cap N$, and  consequently $abAB$ remains too. Now our
$\gamma\in L$ from before which is in $G'\backslash L'$ is also in $N$ as 
$G'\leq N$. But from above we have a surjective homomorphism from $L$ to 
$\z\times\z\times\z/j\z$ for some $j \geq 2$ with $\gamma$ mapping onto the 
$\z/j\z$
factor. But then we can restrict this surjection to the finite index subgroup
$L\cap N$ which also
contains $\gamma$ so $L\cap N$ has the right presentation and the right
homology to obtain largeness.
\end{proof}

Note that Example 3.5 (ii) shows that case (ii) in Theorem 3.6 can occur but 
this is the only type of example known to us.

\section{Ascending HNN extensions of free groups}

A wide and important class of deficiency 1 groups is obtained by taking a free
group $F_n$ with free basis $x_1,\ldots ,x_n$ and an endomorphism $\theta$
of $F_n$ to create the mapping torus 
\[G=\langle x_1,\ldots ,x_n,t|tx_1t^{-1}=\theta(x_1),\ldots ,
tx_nt^{-1}=\theta(x_n)\rangle.\]
We call such a presentation a standard presentation for $G$.
We do not assume that $\theta$ is injective or surjective. However
there is a neat way of sidestepping the non-injective case using \cite{kpnt}
where it is noted that $G$ is isomorphic to a mapping torus of an injective
free group homomorphism $\tilde{\theta}:F_m\rightarrow F_m$ where $m\leq n$.
Of course it might be that $F_n$ is non-abelian but $m=0$ or 1, however this
would mean that 
$G=\z$ or $\langle a,t|tat^{-1}=a^k\rangle$ for $k\neq 0$. However in these
cases $G$ is soluble and so is definitely not large. Therefore we will assume
throughout that $\theta$ is injective, whereupon $G$ is also called an
ascending HNN extension of the free group $F_n$, where we conjugate the base 
$F_n$ to an isomorphic subgroup of itself using the stable
letter $t$.

We have that our base $F_n=\langle x_1,\ldots ,x_n\rangle$ embeds in
$G$ and we will refer to this copy of $F_n$ in $G$ as $\Gamma$. Then $t\Gamma
t^{-1}=\theta(\Gamma)$ which is equal to $\Gamma$ if and only if $\theta$ is
surjective in which case $G$ is free by $\z$. Otherwise $\theta
(\Gamma)<\Gamma$, with $\theta(\Gamma)$ being isomorphic to $F_n$ meaning that 
it has infinite index in $\Gamma$.

Once an ascending HNN extension $G$ is formed, there is an obvious homomorphism
$\chi$ from $G$ onto $\z$ associated with it which is given by $\chi(t)=1$
and $\chi(\Gamma)=0$, so that
\[\mbox{ker }\chi=\bigcup_{i=0}^\infty t^{-i}\Gamma t^i.\]
In the case of an automorphism ker $\chi$ is just
$\Gamma$ but otherwise $\theta(\Gamma)=t\Gamma t^{-1}<\Gamma$ so that ker 
$\chi$ is a strictly ascending union of free groups, thus is infinitely 
generated and locally free, but never free because 
$\beta_1(\mbox{ker }\chi;\q)\leq\beta_1(\Gamma;\q)$.

The following result, which is Lemma 3.1 in \cite{gmsw}, allows us to recognise
ascending HNN extensions ``internally''.
\begin{lem}
A group $G$ with subgroup $\Gamma$ is an ascending HNN extension with $\Gamma$
as base if and only if there exists $t\in G$ with\\
(1) $G=\langle \Gamma,t\rangle$;\\
(2) $t^k\not\in\Gamma$ for any $k\neq 0$;\\
(3) $t\Gamma t^{-1}\leq\Gamma$.
\end{lem}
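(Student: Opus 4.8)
The plan is to prove the two implications separately; the forward direction is immediate from the defining presentation, and the real work is the converse, which amounts to a normal-form argument in an abstract HNN extension.

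For the forward direction, suppose $G$ is an ascending HNN extension with base $\Gamma$, say $G=\langle\Gamma,t\mid t\gamma t^{-1}=\theta(\gamma),\ \gamma\in\Gamma\rangle$ for an injective endomorphism $\theta$ of $\Gamma$. Then (1) and (3) are read off at once, with $t\Gamma t^{-1}=\theta(\Gamma)\leq\Gamma$. For (2) I would use the retraction-type homomorphism $\chi\colon G\to\z$ with $\chi|_\Gamma=0$ and $\chi(t)=1$; it is well defined since every defining relator lies in its kernel, and since $\chi(t^k)=k$ while $\chi$ vanishes on $\Gamma$, no nonzero power of $t$ can belong to $\Gamma$.

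For the converse, assume $t\in G$ satisfies (1)--(3). Conjugation by $t$ is an automorphism of $G$ which, by (3), restricts to an injective homomorphism $\phi\colon\Gamma\to\Gamma$, so I can form the abstract ascending HNN extension $\widehat G=\langle\Gamma,s\mid s\gamma s^{-1}=\phi(\gamma),\ \gamma\in\Gamma\rangle$. There is an obvious homomorphism $\pi\colon\widehat G\to G$ which is the identity on $\Gamma$ and sends $s\mapsto t$; it is well defined because $t$ satisfies the relators by the very definition of $\phi$, and it is surjective by (1). Everything then reduces to showing $\pi$ is injective, for then $G\cong\widehat G$ is an ascending HNN extension with base (the distinguished copy of) $\Gamma$.

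The key step is that, using only the defining relations of $\widehat G$, every element of $\widehat G$ can be brought to the form $s^{-p}\gamma s^{q}$ with $p,q\geq 0$ and $\gamma\in\Gamma$. I would argue this directly: starting from an arbitrary word, repeatedly replace subwords $s\gamma s^{-1}$ by $\phi(\gamma)\in\Gamma$ (each such move strictly lowers the number of $s^{\pm1}$ symbols, so the process halts), after which all the $s^{-1}$'s precede all the $s$'s; then use $s\gamma=\phi(\gamma)s$ to sweep the $\Gamma$-letters sitting among the $s$'s out to the left, and $\gamma s^{-1}=s^{-1}\phi(\gamma)$ to sweep those among the $s^{-1}$'s out to the right, leaving precisely the shape $s^{-p}\gamma s^{q}$. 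No appeal to Britton's Lemma is needed here, since only the existence of such an expression, not its uniqueness, is required. Now if $\pi(w)=1$, write $w=s^{-p}\gamma s^{q}$; applying $\pi$ gives $t^{-p}\gamma t^{q}=1$ in $G$, hence $\gamma=t^{p-q}\in\Gamma$, so by (2) we must have $p=q$ and then $\gamma=t^{0}=1$, whence $w=s^{-p}s^{p}=1$ in $\widehat G$. Thus $\ker\pi$ is trivial. The only slightly delicate point is the normal-form manipulation in $\widehat G$, but it is purely mechanical; the content of the lemma is really just that conditions (2) and (3) encode exactly the data needed to build an ascending HNN extension with base $\Gamma$.
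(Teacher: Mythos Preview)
Your argument is correct. The paper does not actually prove this lemma at all; it simply quotes it as Lemma~3.1 of \cite{gmsw}, so there is no in-paper proof to compare against. Your self-contained treatment is the standard one: build the abstract ascending HNN extension $\widehat G$ from the conjugation endomorphism $\phi=\iota_t|_\Gamma$, define the obvious surjection $\pi\colon\widehat G\to G$, reduce an arbitrary element of $\widehat G$ to the shape $s^{-p}\gamma s^q$ using only the rewriting rules (correctly noting that Britton's Lemma is not needed, since you only want existence of this form, not uniqueness), and then use condition~(2) to force $p=q$ and $\gamma=1$ whenever $\pi$ kills such an element. The only point you leave implicit is that $\gamma=1$ in $G$ forces $\gamma=1$ as an element of $\widehat G$; this is immediate since $\pi|_\Gamma$ is the identity, but it would do no harm to say it.
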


A strong property that ascending HNN extensions of free groups possess is that
they are coherent by \cite{fh}, in fact the result is more general and gives
us this description which is Theorem 1.2 and Proposition 2.3 in \cite{fh}.
\begin{thm}
If $G=\langle t,F\rangle$ is an ascending HNN extension of the (possibly
infinitely generated) free group $F$ with associated homomorphism $\chi$
and $H$ is a finitely generated subgroup
of $G$ then $H$ has a finite presentation of the form
\[\langle s,a_1,\ldots ,a_k,b_1,\ldots ,b_l|sa_1s^{-1}=w_1,\ldots ,
sa_ks^{-1}=w_k\rangle\]
where $a_i,b_j\in\mbox{ker }\chi$ and $k,l\geq 0$, with
$w_1,\ldots ,w_k$ words in the $a_i$ and the $b_j$.
\end{thm}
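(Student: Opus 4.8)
The plan is to follow the approach of \cite{fh}, after first peeling off the elementary cases. If $\chi(H)=0$ then $H$ lies in $\ker\chi$, which is locally free, so $H$ is free of some finite rank $r\geq 1$; taking $k=0$ and letting $s,b_1,\dots,b_{r-1}$ be a free basis (all of which automatically lie in $\ker\chi$) gives the desired presentation. So suppose $\chi(H)=d\z$ with $d>0$ and fix $s\in H$ with $\chi(s)=d$. Because $\ker\chi\unlhd G$, conjugation by $s$ preserves $L:=H\cap\ker\chi$, so $H$ is generated by $s$ together with the free group $L$ (a subgroup of the locally free $\ker\chi$), with $s$ acting on $L$ by an automorphism $\psi$. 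Each generator of $H$ has the form $s^{n}h$ with $h\in L$, so taking the finite set $S$ of the resulting elements $h$ and rewriting any word in $s$ and these $h$'s of total $s$-exponent zero as a product of elements $\psi^{m}(h^{\pm1})$, we get $L=\langle\,\psi^{j}(x):x\in S,\ j\in\z\,\rangle$. Hence the finitely generated free subgroups $B^{(n)}:=\langle\,\psi^{j}(x):x\in S,\ |j|\leq n\,\rangle$ increase to $L$, and $H=\langle s,B^{(n)}\rangle$ for all $n\geq 0$.

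The statement now reduces to the following \emph{stabilisation} claim: for all sufficiently large $n$, the group $B:=B^{(n)}$ has the properties that $B_0:=B\cap\psi^{-1}(B)$ is a free factor of $B$, and that the natural epimorphism onto $H$ from the HNN extension of $B$ along the isomorphism $\psi|_{B_0}\colon B_0\to\psi(B_0)\leq B$ (with stable letter $s$) is an isomorphism. Granting this, the theorem follows by a routine unwinding: choose a free basis $a_1,\dots,a_k$ of $B_0$, extend it by elements $b_1,\dots,b_l$ to a free basis of $B$, and note that the defining relations of this HNN extension are exactly $sa_is^{-1}=\psi(a_i)$, where each $w_i:=\psi(a_i)$ lies in $B$ and hence is a word in the $a$'s and $b$'s, with all the $a_i,b_j$ in $L\leq\ker\chi$. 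In the stabilisation claim the surjectivity and the equality $H=\langle s,B\rangle$ are immediate; what carries the content is the injectivity together with the free-factor assertion, i.e.\ that the ascending chain $(B^{(n)})$ is eventually ``$\psi$-closed'' in the appropriate Stallings-graph sense. It is precisely here that a nontrivial $l$ appears (when $L=H\cap\ker\chi$ is infinitely generated), so no naive choice such as $B=B^{(0)}$ can work in general.

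Proving the stabilisation claim is the main obstacle, and it is what \cite{fh} establishes. The method is to let $G$ act on its Bass--Serre tree $T$ as an HNN extension of $F$ (one orbit each of vertices and edges, stabilisers the conjugates of $F$), restrict the action to $H$, and study the minimal $H$-invariant subtree $T_H$ together with the height function that $\chi$ induces on $T$. The ascending structure of the HNN extension, combined with bounded-cancellation control (in the style of Cooper's lemma) on how the injective endomorphism distorts finitely generated subgroups of $F$, is used to show that $H\backslash T_H$ is finite and that, after folding, the vertex and edge groups of the resulting graph-of-groups decomposition of $H$ can be taken finitely generated, hence finitely generated free; reorganised around $\chi$ this is exactly the single-loop decomposition encoded by the stabilisation claim. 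I expect this tree-and-distortion analysis to be by far the hardest part; the reductions in the first paragraph and the passage from the finite graph of groups to the displayed presentation are bookkeeping.
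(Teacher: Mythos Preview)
The paper does not give its own proof of this statement; it simply quotes it as Theorem 1.2 and Proposition 2.3 of \cite{fh}. Your proposal is an accurate outline of the Feighn--Handel argument: the reduction to $\chi(H)\neq 0$, the semidirect description $H=\langle s\rangle\ltimes L$ with $L=H\cap\ker\chi$ locally free, the exhaustion by the finitely generated $B^{(n)}$, and the identification of the hard step as a stabilisation/finiteness result proved via the Bass--Serre tree together with bounded-cancellation control. So you and the paper are doing the same thing, except that you actually unpack what \cite{fh} does while the paper just cites it.

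Two small remarks. First, your treatment of the case $\chi(H)=0$ tacitly assumes $H$ is nontrivial (you write ``rank $r\geq 1$''); the displayed presentation always contains the generator $s$ and hence never gives the trivial group, so this is a harmless edge case in the statement rather than a defect in your argument. Second, your formulation of the stabilisation claim via $B_0=B\cap\psi^{-1}(B)$ is a clean way to phrase the eventual HNN structure, but note that \cite{fh} organises the argument slightly differently (working directly with the graph-of-groups decomposition coming from the minimal subtree rather than singling out a specific $B^{(n)}$); your version is equivalent once one knows the quotient graph is finite with finitely generated edge and vertex groups.
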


The next proposition gives us standard but useful properties of ascending
HNN extensions.

\begin{prop}
Let $G$ be an ascending HNN extension
\[\langle x_1,\ldots ,x_n,t|tx_1t^{-1}=\theta(x_1),\ldots ,
tx_nt^{-1}=\theta(x_n)\rangle\]
with respect to the injective endomorphism $\theta$ of the finitely generated
free group $\Gamma=F_n$ with free basis $x_1,\ldots ,x_n$
and let $\chi$ be the associated homomorphism.\\
(i) Each element $g$ of $G$ has an expression of the form 
$g=t^{-p}\gamma t^q$ for $p,q\geq 0$ and $\gamma\in\Gamma$.\\
(ii) For each $j\in\mathbb N$ we have for $s=t^j$ the normal subgroup 
$G_j=\langle \Gamma,s\rangle$ of index $j$ in $G$ with $G/G_j\cong\z/j\z$
which has presentation
\[\langle x_1,\ldots ,x_n,s|sx_1s^{-1}=\theta^j(x_1), \ldots ,sx_ns^{-1}=
\theta^j(x_n)\rangle.\]
(iii) If $H\leq_f G$ then $H$ is also an ascending HNN extension of a
finitely generated free group with respect to the
(restriction to $H$ of the) same associated homomorphism $\chi$.\\
(iv) If $\Delta\leq_f\Gamma$ then $H=\langle\Delta,t\rangle$ has finite index 
in $G=\langle\Gamma,t\rangle$.
\end{prop}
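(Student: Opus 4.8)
The plan is to handle the four parts in turn, with Lemma 4.1 supplying the HNN structure once the right subgroups are pinned down. For (i), I would induct on the length of a word in $\Gamma\cup\{t^{\pm1}\}$ representing $g$: the relation $t\delta=\theta(\delta)t$ pushes every positive power of $t$ rightward past elements of $\Gamma$, the relation $\delta t^{-1}=t^{-1}\theta(\delta)$ pushes every $t^{-1}$ leftward, occurrences of $t$ and $t^{-1}$ meeting in the middle cancel, and the one case needing attention is right multiplication by $t^{-1}$ when no positive $t$ survives, which is absorbed by rewriting $t^{-p}\gamma$ as $t^{-(p+1)}\theta(\gamma)$.

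For (ii), the crux is that $G_j=\langle\Gamma,t^j\rangle$ equals $\chi^{-1}(j\z)$. The inclusion $G_j\subseteq\chi^{-1}(j\z)$ is immediate from the normal form of (i) applied inside $G_j$ with stable letter $s=t^j$; for the reverse inclusion, given $g$ with $\chi(g)\in j\z$ write $g=t^{-p}\gamma t^q$ and use the identity $t^{-p}\gamma t^q=t^{-(p+a)}\theta^a(\gamma)t^{q+a}$ to raise both exponents to multiples of $j$. This gives $G_j\unlhd G$ with $G/G_j\cong\z/j\z$, and the presentation then drops out of Lemma 4.1 applied to $(G_j,\Gamma,t^j)$: conditions (1)--(3) hold because $t^j\Gamma t^{-j}=\theta^j(\Gamma)\leq\Gamma$ and $\chi((t^j)^k)=jk\neq0$, so $G_j$ is isomorphic to the ascending HNN extension of $F_n$ determined by $\theta^j$, whose standard presentation is precisely the displayed one.

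Part (iii) carries the real content. Take $H\leq_f G$, set $\chi(H)=e\z$, pick $t'\in H$ with $\chi(t')=e$, and by (i) write $t'=t^{-p}\gamma t^q$, so that $q-p=e$ and hence $q\geq1$. Put $\Gamma'=H\cap\ker\chi\unlhd H$ and $C_i=H\cap t^{-i}\Gamma t^i$; each $C_i$ is then a finitely generated free group, being of finite index (at most $[G:H]$) in $t^{-i}\Gamma t^i\cong F_n$, the $C_i$ ascend, and $\Gamma'=\bigcup_i C_i$. The computational heart is the identity $t'(t^{-i}\Gamma t^i){t'}^{-1}=t^{-(i-e)}\Gamma t^{i-e}$ valid for all $i\geq q$: for such $i$ the element $\gamma$ lies in $t^{-(i-q)}\Gamma t^{i-q}$ and so cancels out after the conjugation. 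Intersecting with $H=t'H{t'}^{-1}$ gives $t'C_i{t'}^{-1}=C_{i-e}$ for $i\geq q$, equivalently ${t'}^{-m}C_{q-1}{t'}^m=C_{q-1+me}$ for all $m\geq0$ together with $t'C_{q-1}{t'}^{-1}\leq C_{q-1}$. Since the indices $q-1+me$ are cofinal in the ascending chain, $\Gamma'=\bigcup_{m\geq0}{t'}^{-m}C_{q-1}{t'}^m$, whence $H=\langle C_{q-1},t'\rangle$; conditions (1)--(3) of Lemma 4.1 now hold for $(H,C_{q-1},t')$, so $H$ is an ascending HNN extension of the finitely generated free group $C_{q-1}$, and checking that $\ker(\chi|_H)=\Gamma'$ identifies $\chi|_H$ as the associated homomorphism. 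I expect the main obstacle to be exactly this bookkeeping: isolating a single finite-rank subgroup $C_{q-1}$ to serve as base and verifying that conjugating it by powers of $t'$ sweeps out all of $\Gamma'$; the surrounding steps are formal. If one prefers, finite generation of the base may instead be quoted from the Feighn--Handel structure theorem, Theorem 4.2 above.

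For (iv), let $\Delta\leq_f\Gamma$ with $[\Gamma:\Delta]=m$ and $H=\langle\Delta,t\rangle$. Since $t\in H$ we have $\chi(H)=\z$, so $H\ker\chi=G$ and therefore $[G:H]=[\ker\chi:H\cap\ker\chi]$. As $H$ contains $t^{-i}\Delta t^i$, which has index $m$ in $t^{-i}\Gamma t^i$, the subgroup $H\cap t^{-i}\Gamma t^i$ has index at most $m$ in $t^{-i}\Gamma t^i$ for every $i$. Using the ascending union $\ker\chi=\bigcup_i t^{-i}\Gamma t^i$, any finitely many elements lying in distinct cosets of $H\cap\ker\chi$ already lie in a common $t^{-i}\Gamma t^i$ and stay in distinct cosets of $H\cap t^{-i}\Gamma t^i$ there; hence $[\ker\chi:H\cap\ker\chi]\leq m$, and so $[G:H]\leq[\Gamma:\Delta]<\infty$.
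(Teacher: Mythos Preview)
Your argument is correct in all four parts. Parts (i) and (ii) are handled in the paper simply by citation to \cite{fh} and \cite{kp}, so your explicit normal-form and $\chi^{-1}(j\z)$ arguments are essentially the content of those cited lemmas written out; one cosmetic remark is that the inclusion $G_j\subseteq\chi^{-1}(j\z)$ needs nothing more than $\chi(\Gamma)=0$ and $\chi(t^j)=j$, so invoking the normal form there is unnecessary.

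Parts (iii) and (iv), however, are proved by genuinely different methods from the paper. For (iii) the paper does not construct a base at all: it quotes the Feighn--Handel structure theorem (Theorem~4.2) to get a presentation of $H$ of the displayed shape, and then argues via asphericity of the standard $2$-complex and the Hopf bound $\mathrm{def}(H)\leq\beta_1(H)-\beta_2(H)$ that every finite-index subgroup of $G$ has deficiency exactly~$1$, forcing $l=0$ in that presentation. Your approach is instead an elementary, self-contained construction: you isolate $C_{q-1}=H\cap t^{-(q-1)}\Gamma t^{q-1}$ as the base and verify Lemma~4.1 by the conjugation identity $t'C_i{t'}^{-1}=C_{i-e}$. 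The paper's route yields the bonus that $\mathrm{def}(H)=1$ for all $H\leq_fG$ (a fact used later in the paper), whereas your route avoids any dependence on coherence or asphericity and makes the base explicit. For (iv) the paper fixes a transversal $\gamma_1,\dots,\gamma_d$ for $\Delta$ in $\Gamma$, builds the candidate set $S=\{t^{-m}\gamma_it^m\}$, and runs a level-by-level ``crossing off'' pigeonhole to bound the number of $(H\cap K)$-cosets by $d$; your ascending-union argument (any $m+1$ coset representatives already live in a single $t^{-i}\Gamma t^i$, where the index is at most $m$) reaches the same bound $[G:H]\leq[\Gamma:\Delta]$ more directly. Both arguments are valid; yours is shorter, the paper's is more hands-on with explicit coset representatives.
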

\begin{proof}
(i) is \cite{fh} Lemma 2.2 (1).\\ 
(ii) This is \cite{kp} Lemma 2.2 (1).\\
(iii) This can be proved directly but we may use the fact that $H$ has a
presentation as in Theorem 4.2. We now show that $G$ and all its finite index 
subgroups have deficiency exactly 1 so we must have $l=0$ in this
presentation and then the result follows. This also demonstrates that in
proving certain ascending HNN extensions of free groups are large, we are
genuinely finding new examples as opposed to groups that could be proved
large by the Baumslag-Pride result \cite{bp}.

We know that $H$ has a deficiency 1 presentation by Reidemeister-Schreier
rewriting the standard presentation for $G$. By \cite{swll} Proposition
3.6 (ii) we have that the 2-complex $C$ associated to this standard
presentation of $G$ is aspherical and the Euler characteristic $\chi(C)=
1-(n+1)+n=0$. Therefore the finite cover $\tilde{C}$ of $C$ with fundamental
group $H$ has $\chi(\tilde{C})=0$ and is also aspherical. Now by the Hopf
formula we have that an upper bound for the deficiency of any finitely
presented group $\Gamma$ is $\beta_1(\Gamma)-\beta_2(\Gamma)$. As $\tilde{C}$
is a $K(H,1)$ space we have $\beta_1(H)-\beta_2(H)=\beta_1(\tilde{C})-\beta_2
(\tilde{C})=1-\chi(\tilde{C})=1$ and so our deficiency 1 presentation for $H$
is best possible.\\ 
(iv) Let $\gamma_1,\ldots ,\gamma_d$ be a transversal for $\Delta$ in $\Gamma$.
If we can show that $H\cap K\leq_f K$ for $K$ the kernel of the associated
homomorphism then we are done, despite the fact that
$K$ and $H\cap K$ are infinitely generated, because $t\in H$ and any $g\in G$
is of the form $kt^m$ for $k\in K$.

The set
\[S=\{t^{-m}\gamma_it^m:m\in\mathbb N,1\leq i\leq d\}\]
contains an element of every coset of $H\cap K$ in $K$. This can be seen by
writing $k\in K$ as $t^{-m}\gamma t^m$ for $\gamma\in\Gamma$ using (i). Then
there is $\gamma_i$ such that $\gamma\gamma_i=\delta\in\Delta$. This means that
$kt^{-m}\gamma_it^m=t^{-m}\delta t^m$ which is in $H$ and in $K$. We now show
that the index of $H\cap K$ in $K$ is at most $d$. Note that for $q>p$,
any element of the form $t^{-p}\gamma_it^p$ is in the same coset as some 
element of the form $t^{-q}\gamma_jt^q$ because $\theta^{q-p}(\gamma_i)
\gamma_j^{-1}=\delta$ for some $\delta\in\Delta$ 
and some $j\in\{1,\ldots ,d\}$,
thus giving $t^{-p}\gamma_it^p(t^{-q}\gamma_jt^q)^{-1}=t^{-q}\delta t^q$ which
is in $H\cap K$. Therefore we proceed as follows: $S$ is a set indexed by
$(l,i)\in\mathbb N\times\z/d\z$ and we refer to $l$ as the level. Choose a 
transversal $T$ for $H\cap K$ in $K$ from $S$ which a priori could be
infinite and let $g_1$ be the element in
$T$ with smallest level $l_1$ (and smallest $i$ if necessary). Then for each
level $l_1+1,l_1+2,\ldots$ above $l_1$ there is an element in $S$ with this
level that is in the same coset of $H\cap K$ as $g_1$ and so cannot be in $T$.
Cross these elements off from $S$ and now take the next element $g_2$ in $T$
according to our ordering of $S$. Certainly $g_2$ with level $l_2$ has not been
crossed off and we repeat the process of removing one element in each level 
above $l_2$; as these are in the same coset as $g_2$ they too have not been 
erased
already. Now note that we can go no further than $g_d$ because then we will
have crossed off all elements from all levels above $l_d$; thus we must have a
transversal for $H\cap K$ in $K$ of no more than $d$ elements.
\end{proof}

Let $G=\langle
F_n,t\rangle$ be the mapping torus of an injective endomorphism $\theta$ of
the free group $F_n$. We say that $\theta$ has a periodic conjugacy class 
if there exists $i>0$, $k\in\z$ and $w\in F_n\backslash
\{1\}$ such that $\theta^i(w)$ is
conjugate to $w^k$ in $F_n$. If this is so with $\theta^i(w)=vw^kv^{-1}$ then
let us take the endomorphism $\phi$ of $F_n$ such that $\phi=\iota_v^{-1}
\theta^i$ where we use $\iota_v$ to denote the inner automorphism of $F_n$
that is conjugation by $v$.
We have on setting $\Delta=\langle w\rangle$ and $s=v^{-1}t^i$ 
that the subgroup $\langle\Delta,s\rangle$ of $G$ is an ascending HNN
extension with base $\Delta$ and stable letter $s$ by Lemma 4.1. Consequently
it has the presentation $\langle s,w|sws^{-1}=w^k\rangle$. 
These presentations are part of the
famous family of 2 generator 1 relator subgroups known as the Baumslag-Solitar
groups. We define the Baumslag-Solitar group $BS(j,k)=\langle x,y|xy^jx^{-1}=
y^k\rangle$ for $j,k\neq 0$ (and without loss of generality $j>0$). 
We have that $G$ contains $BS(1,k)$ for some $k$ if and only 
if $G$ has a periodic conjugacy class where $\theta^i(w)$ is conjugate 
to $w^k$. Furthermore if there exists 
$i,j>0$, $k\in\z$ and $w\in F_n\backslash\{ 1\}$ with
$\theta^i(w^j)$ conjugate in $F_n$ to $w^k$ then $k=dj$ and $\theta^i(w)$ is
conjugate to $w^d$ so that $\theta$  has a periodic conjugacy class.
Indeed $G$ cannot contain a subgroup isomorphic to a Baumslag-Solitar group
$BS(j,k)$ unless $j=1$ (or $j=k$ in which case $G$ contains $BS(1,1)$ anyway).

We can now deal with ascending HNN extensions of free groups which contain
$\z\times\z$.

\begin{thm}
If $\theta$ is an injective endomorphism of the free group $\Gamma$ of rank
$n$ with $w\in F_n\backslash\{1\}$ such that $\theta(w)=w$ then there is a
finite index subgroup $\Delta$ of $\Gamma$ and $j\geq 1$ such that $\Delta$
has a free basis including $w$, and such that $\theta^j(\Delta)\leq\Delta$.
\end{thm}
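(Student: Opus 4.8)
The plan is to work inside the free group $\Gamma=F_n$ and exploit the fact that $\theta(w)=w$ to control how $w$ sits relative to the images $\theta^j(\Gamma)$. First I would reduce to the case where $w$ is not a proper power: if $w=w_0^m$ then $\theta(w_0)$ is a root of $\theta(w)=w=w_0^m$, and since roots in a free group are unique, $\theta(w_0)=w_0$ as well, so we may replace $w$ by $w_0$ and prove the statement for that; once we have a free basis of some $\Delta$ containing $w_0$, it also contains $w=w_0^m$ only in the weaker sense, so actually it is cleaner to keep $w_0$ and note $w\in\Delta$ anyway. So assume $w$ is not a proper power, hence $w$ is part of a free basis of $\Gamma$ (it is primitive once we check it is not a proper power of a primitive — but not every non-proper-power is primitive; I will instead pass to a finite index subgroup of $\Gamma$ in which $w$ becomes primitive, using Marshall Hall's theorem that a finitely generated subgroup of a free group is a free factor of a finite index subgroup, applied to $\langle w\rangle$).

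Next, the key geometric fact: since $\theta$ is injective but (in the interesting case) not surjective, the images $\theta^j(\Gamma)$ form a strictly descending chain of free subgroups of $\Gamma$, each of finite rank $n$ but of infinite index in $\Gamma$ once $j\geq 1$. The element $w$ lies in every $\theta^j(\Gamma)$, because $w=\theta(w)=\theta^2(w)=\cdots$. The idea is to build $\Delta$ as a finite index subgroup of $\Gamma$ that contains $\theta^j(\Gamma)$ for a suitable $j$ (so that $\theta^j(\Delta)\leq\theta^j(\Gamma)\leq\Delta$ automatically), while simultaneously arranging that $w$ is a member of a free basis of $\Delta$. For the first requirement I would use Proposition 4.4(iv) in the form: $\langle\theta^j(\Gamma),w\rangle$, or more robustly a finite index subgroup of $\Gamma$ containing $\theta^j(\Gamma)$, can be taken with $w$ primitive — here one uses that $\theta^j(\Gamma)$ has finite index in $\langle\theta^j(\Gamma),\Gamma\rangle=\Gamma$? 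No: $\theta^j(\Gamma)$ has infinite index. So I must be more careful: I want a finite index $\Delta\leq\Gamma$ with $\theta^j(\Delta)\leq\Delta$, which is a genuine constraint, not something that follows from $\theta^j(\Gamma)\leq\Delta$ unless $\Delta\supseteq\theta^j(\Gamma)$ — and that is impossible for finite index $\Delta$. Hence the real mechanism has to be different: one finds $\Delta$ characteristic-like with respect to $\theta$.

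The correct approach, which I would carry out, is to use the residual finiteness of $\Gamma$ together with a pigeonhole on the induced maps on finite quotients. For each finite index characteristic subgroup $K\unlhd\Gamma$, the endomorphism $\theta$ induces an injective — hence bijective — endomorphism $\bar\theta$ of the finite group $\Gamma/K$, so some power $\bar\theta^{\,j_K}$ is the identity on $\Gamma/K$, i.e. $\theta^{\,j_K}(\Gamma)\subseteq K$ would be false, rather $\theta^{\,j_K}(g)\equiv g \pmod K$ for all $g$. Now choose $K$ to be a finite index subgroup with $w\notin$ (any unwanted coset) and such that $w$ is primitive in the finite index subgroup $\Delta_0$ corresponding to a Hall completion of $\langle w\rangle$; intersecting $\Delta_0$ with a $\theta$-periodic $K$ and replacing by a $\theta^j$-invariant refinement obtained from the pigeonhole on the finitely many subgroups of $\Gamma$ of the bounded index $[\Gamma:\Delta_0]$ (there are only finitely many, and $\theta$ permutes, eventually periodically, the ones of a fixed index — no, $\theta$ need not preserve index since it's not onto — but $\theta^{-1}$ of a finite index subgroup has finite index and $\theta$ maps finite-index subgroups into finite-index subgroups of $\theta(\Gamma)$...). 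The cleanest route: let $\Delta_1\leq_f\Gamma$ contain $w$ as a basis element; the subgroups $\theta^{-j}(\Delta_1)\cap\Gamma$ all have finite index in $\Gamma$, bounded by $[\Gamma:\Delta_1]$, so by pigeonhole two of them coincide, giving periodicity $\theta^{-a}(\Delta_1)\cap\Gamma=\theta^{-a-j}(\Delta_1)\cap\Gamma$; set $\Delta=\bigcap_{i=0}^{j-1}(\theta^{-i-a}(\Delta_1)\cap\Gamma)$, which is finite index, satisfies $\theta^{j}(\Delta)\leq\Delta$, and — since $w$ is $\theta$-fixed and lies in $\Delta_1$ — contains $w$; finally pass to a further Hall completion to make $w$ a basis element of $\Delta$, re-running the periodicity argument if necessary. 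The main obstacle is exactly this bookkeeping: ensuring that the pigeonhole-produced $\theta^j$-invariant finite index subgroup can be taken to contain $w$ as part of a free basis, since "contains $w$" and "is $\theta^j$-invariant" and "makes $w$ primitive" must be arranged simultaneously; the fixedness $\theta(w)=w$ is what makes the first two compatible, and Marshall Hall / Stallings foldings handle the third on a finite index refinement without destroying $\theta^j$-invariance once one takes the core.
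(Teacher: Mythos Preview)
Your skeleton is the same as the paper's: start from a Marshall Hall completion $\Delta_1\leq_f\Gamma$ with $w$ a free-basis element, look at the pullbacks $\theta^{-i}(\Delta_1)$, observe that their indices in $\Gamma$ are bounded (the paper proves they are non-increasing via $[\Gamma:\theta^{-1}(F)]=[\theta(\Gamma):F\cap\theta(\Gamma)]\leq[\Gamma:F]$), and use pigeonhole on the finitely many subgroups of bounded index to find a repeat $\theta^{-a}(\Delta_1)=\theta^{-a-j}(\Delta_1)$. Up to here you are fine. Incidentally, once you have this repeat you can simply set $\Delta=\theta^{-a}(\Delta_1)$ and get $\theta^{-j}(\Delta)=\Delta$, hence $\theta^j(\Delta)\leq\Delta$; your intersection $\bigcap_{i=0}^{j-1}\theta^{-i-a}(\Delta_1)$ is unnecessary and only makes the next step harder.

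The genuine gap is exactly the one you flag as ``the main obstacle'' and then do not resolve: why is $w$ a free-basis element of the subgroup you end up with? Your proposed fix --- pass to a further Hall completion inside $\Delta$ and ``re-run the periodicity argument if necessary'' --- does not terminate: the new Hall completion has larger index, the new pigeonhole produces a new pullback that again need not have $w$ primitive, and nothing forces the process to close up. The vague appeal to ``taking the core'' does not help either, since cores destroy the free-factor property.

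The paper avoids this loop by proving directly, by induction on $i$, that $w$ is a free-basis element of every pullback $\theta^{-i}(\Delta_1)$. The point is that $\theta$ restricts to an isomorphism $\theta^{-1}(F)\to\theta\theta^{-1}(F)$ sending $w$ to $w$; since $\theta\theta^{-1}(F)$ is a subgroup of $F$ containing the free-basis element $w$ of $F$, a standard fact (e.g.\ \cite{ls} Proposition I.3.19) gives that $w$ is a free-basis element of $\theta\theta^{-1}(F)$, and pulling back along the isomorphism shows $w$ is a free-basis element of $\theta^{-1}(F)$. Iterating, every $\theta^{-i}(\Delta_1)$ has $w$ in a free basis, so the particular one picked out by pigeonhole already has the required property --- no further Hall completion is needed. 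This inductive step is the missing idea in your write-up.
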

\begin{proof}
We use the classic result \cite{hjnr} of Marshall Hall Jnr. that if $L$ is a 
non-trivial
finitely generated subgroup of the non-abelian free group $F_n$ then there
is a finite index subgroup $F$ of $F_n$ such that $L$ is a free factor of 
$F$. We just need to put $L=\langle w\rangle$ so that $F=\langle w\rangle
*C$ for some $C\leq F_n$ with $w$ a basis element for $F$. The second condition
is the crucial part. The aim is to repeatedly pull back $F$; although we do
not have $F\leq\theta^{-1}(F)$ in general as this is equivalent to $\theta(F)
\leq F$ which would mean we are done, we do find that the index is 
non-increasing. To see this note that $\theta^{-1}(F)=\theta^{-1}(F\cap\theta
(\Gamma))$ and $\theta^{-1}\theta(\Gamma)=\Gamma$ as $\theta:\Gamma\rightarrow
\theta(\Gamma)$ is an isomorphism. Now the index of 
$F\cap\theta(\Gamma)$ in $\theta(\Gamma)$ is preserved
by applying $\theta^{-1}$ to both sides, so it is equal to the index of 
$\theta^{-1}(F)$ in $\Gamma$. But the index of $F\cap\theta(\Gamma)$ in 
$\theta(\Gamma)$ is no more than that of $F$ in $\Gamma$, thus
$[\Gamma:\theta^{-i}(F)]$ gives us a non-increasing sequence
which must stabilise at $N$ with value $k$. When it does we have for $i\geq 0$
that $\theta^{-(i+N)}(F)$ is just moving around the finitely many index $k$
subgroups. Although it happens that $\theta^{-1}$ does not in general  
permute these index $k$ subgroups, we must land on some such subgroup
$\Delta$ twice so we have $j\geq 1$ with $\theta^{-j}(\Delta)=\Delta$, giving
$\Delta\geq\theta^j(\Delta)$.

We now show that, although the rank of $\theta^{-i}(F)$ reduces whenever the
index reduces, we can keep $w$ as an element of a free basis each time we pull
back. This time we restrict $\theta$ to an injective homomorphism from 
$\theta^{-1}(F)$ to $F$ with image $\theta\theta^{-1}(F)$. As $\theta
\theta^{-1}(F)$ is a finitely generated subgroup of $F$ containing a free basis
element $w$ of $F$, we can ensure $w$ is in a free basis for $\theta\theta^{-1}
(F)$ (for instance see \cite{ls} Proposition I.3.19). Now $\theta^{-1}(F)$ and 
$\theta\theta^{-1}(F)$ are isomorphic via $\theta$ with inverse $\phi$ say, 
so a basis $b_1,\ldots ,b_r$ for the latter gives rise to a basis $\phi(b_1),
\ldots ,\phi(b_r)$ for $\theta^{-1}(F)$ and if $b_1=w$ then $\phi(b_1)=w$.
\end{proof}
\begin{co}
If $G=\langle\Gamma,t\rangle$ is a mapping torus of an injective endomorphism
$\theta$ of the free group $\Gamma$ of rank $n$ and 
$\z\times\z\leq G$ then we have $H\leq_f G$ such that $H$ has a deficiency 1
presentation $\langle x_1,\ldots ,x_m,s|r_1,\ldots ,r_m\rangle$ including a
relator of the form $sx_1s^{-1}x_1^{-1}$.
\end{co}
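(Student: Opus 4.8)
The plan is to push a $\z\times\z$ subgroup through the machinery already assembled in this section; essentially all of the genuine difficulty has been absorbed into Theorem 4.4.

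First I would reduce to the case where $\theta$ itself fixes a nontrivial element of $\Gamma$. Since $\z\times\z\cong BS(1,1)$ is contained in $G$, the discussion preceding the corollary supplies $i>0$ and $w\in F_n\backslash\{1\}$ with $\theta^i(w)=vwv^{-1}$ for some $v\in F_n$. Put $\phi=\iota_v^{-1}\theta^i$ and $s=v^{-1}t^i$; then $\phi$ is an injective endomorphism of $\Gamma=F_n$ with $\phi(w)=v^{-1}\theta^i(w)v=w$, and $s\gamma s^{-1}=v^{-1}\theta^i(\gamma)v=\phi(\gamma)$ for all $\gamma\in\Gamma$. As $v\in\Gamma$ we have $\langle\Gamma,s\rangle=\langle\Gamma,t^i\rangle$, which by Proposition 4.3(ii) is the index $i$ subgroup $G_i$ of $G$ and, by Lemma 4.1, is itself the mapping torus of $\phi$ with base $\Gamma$ and stable letter $s$. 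Since a finite index subgroup of $G_i$ is finite index in $G$, it is enough to find such an $H$ inside $G_i$; replacing $(G,\theta)$ by $(G_i,\phi)$ we may therefore assume $\theta(w)=w$ for some $w\in\Gamma\backslash\{1\}$. (If $n\le1$ then the hypothesis forces $G\cong BS(1,\pm1)$, for which the claim is immediate, so assume $\Gamma$ is non-abelian.)

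Next I would apply Theorem 4.4 to $\theta$ and $w$: it gives $\Delta\leq_f\Gamma$ with a free basis $x_1=w,x_2,\ldots,x_m$ together with $j\geq1$ such that $\theta^j(\Delta)\leq\Delta$. Consider the chain
\[H:=\langle\Delta,t^j\rangle\leq\langle\Gamma,t^j\rangle\leq G.\]
By Proposition 4.3(ii), $\langle\Gamma,t^j\rangle$ has index $j$ in $G$ and is the mapping torus of $\theta^j$ with base $\Gamma$ and stable letter $t^j$; since $\Delta\leq_f\Gamma$, Proposition 4.3(iv) applied to it gives $H\leq_f\langle\Gamma,t^j\rangle$, and hence $H\leq_f G$. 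By Lemma 4.1 (its three conditions holding because $H$ is by definition generated by $\Delta$ and $t^j$, because $\chi$ is nonzero on every nonzero power of $t^j$ while vanishing on $\Delta$, and because $t^j\Delta t^{-j}=\theta^j(\Delta)\leq\Delta$), $H$ is an ascending HNN extension of $\Delta=F_m$ with stable letter $t^j$ and injective endomorphism $\psi=\theta^j|_\Delta$. Thus $H$ has the standard presentation
\[\langle x_1,\ldots,x_m,t^j\mid t^jx_1t^{-j}=\psi(x_1),\ldots,t^jx_mt^{-j}=\psi(x_m)\rangle,\]
which has $m+1$ generators and $m$ relators, so deficiency $1$. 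Finally, $\theta(w)=w$ gives $\psi(x_1)=\theta^j(w)=w=x_1$, so the first relator is $t^jx_1t^{-j}x_1^{-1}$; renaming $t^j$ as $s$ yields precisely the asserted presentation.

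The only points needing care are bookkeeping ones: extracting $w\in\Gamma$ with $\theta^i(w)$ conjugate to $w$ from the $\z\times\z$ subgroup (this is exactly the passage set up just before the corollary), verifying at each stage that the subgroup in play satisfies conditions (1)--(3) of Lemma 4.1 so that Proposition 4.3(ii),(iv) may be invoked, and tracking the composite of the finite index inclusions down to $G$. The substantive work — producing a finite index free subgroup $\Delta$ with $w$ in a free basis and invariant under a power of the endomorphism — has already been done in Theorem 4.4 via Marshall Hall's theorem, so I do not anticipate a real obstacle.
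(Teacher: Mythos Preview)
Your proof is correct and follows essentially the same route as the paper: reduce via Proposition 4.3(ii) and an inner twist to the case $\theta(w)=w$, invoke Theorem 4.4 to produce $\Delta\leq_f\Gamma$ with $w$ in a free basis and $\theta^j(\Delta)\leq\Delta$, then use Proposition 4.3(ii),(iv) and Lemma 4.1 to see $\langle\Delta,t^j\rangle\leq_fG$ is an ascending HNN extension whose standard presentation contains the commutator relator. Your explicit verification of the Lemma 4.1 conditions and the separate handling of the rank $\leq 1$ case are minor elaborations, not departures.
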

\begin{proof}
As $BS(1,1)\leq G$ we have $w\in\Gamma\backslash\{1\}$ with $\theta^i(w)=
vwv^{-1}$ for some $v\in\Gamma$,
thus on dropping to the index $i$ subgroup $H$ of $G$ 
given by $H=\langle\Gamma,t^i\rangle$ and setting $\phi$ to be $\iota_v^{-1}
\theta^i$ where $\iota_v(x)=vxv^{-1}$, we can
assume that there is $w\in\Gamma\backslash\{1\}$ with $\phi(w)=w$ and that
$H$ is an ascending HNN extension of $\Gamma$ via the injective endomorphism
$\phi$ and with stable letter $t_H$ say. So by
Theorem 4.4 we have $\Delta\leq_f\Gamma$ with $\Delta$ having a free basis
$w,x_2,\ldots ,x_m$ and $j\geq 1$ with $\phi^j(\Delta)\leq\Delta$. Thus by
Proposition 4.3 (ii) and (iv) we have that $L=\langle\Delta,s|s=t_H^j\rangle$ 
has finite index in $G$ and by Lemma 4.1 $L$ 
is an ascending HNN extension 
with base $\Delta$ and stable letter $s$. Thus on taking the standard 
presentation for $L$ given by conjugation of $s$ on this free basis for 
$\Delta$, we see that it has deficiency 1 
with a relator equal to $sws^{-1}w^{-1}$.
\end{proof}

We can now gain largeness for a range of mapping tori.
\begin{co} If $G=\langle\Gamma,t\rangle$ is the mapping torus of an
endomorphism $\theta$ of the free group $\Gamma$ of rank $n$ and $\z\times\z
\leq G$ then $G=\langle x,y|xyx^{-1}=x^{\pm 1}\rangle$ or is large.
\end{co}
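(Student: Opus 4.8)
The plan is to use Corollary 4.5 to extract from $G$ a finite index subgroup carrying a deficiency $1$ presentation one of whose relators is a commutator, and then to feed this into the trichotomy of Theorem 3.6, eliminating the two non-large alternatives.

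First I would reduce to the injective case. Since $\theta$ is only assumed to be an endomorphism, by \cite{kpnt} the group $G$ is isomorphic to a mapping torus of an injective endomorphism $\tilde\theta$ of a free group $F_m$ with $m\leq n$. If $m\leq 1$ then, as recorded at the start of this section, $G$ is $\z$ or a soluble group of the form $\langle a,t\mid tat^{-1}=a^k\rangle$ with $k\neq 0$, and among these only $BS(1,1)=\z\times\z$ and $BS(1,-1)$ (the Klein bottle group) contain a copy of $\z\times\z$; these are the exceptions in the statement. So from now on I assume $m\geq 2$, so that $G$ is an ascending HNN extension of the non-abelian free group $\Gamma=F_m$ and still contains $\z\times\z$.

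Now Corollary 4.5 yields $H\leq_f G$ with a deficiency $1$ presentation $\langle x_1,\ldots ,x_k,s\mid r_1,\ldots ,r_k\rangle$ one of whose relators is the commutator $sx_1s^{-1}x_1^{-1}$, so Theorem 3.6 forces one of: $H=\z\times\z$; $H$ is NARA with abelianisation $\z\times\z$; or $H$ is large. The first case cannot occur because $\Gamma\cong F_m$ with $m\geq 2$ embeds in $G$, so $G$ contains a non-abelian free subgroup and is therefore not virtually abelian, hence has no finite index subgroup isomorphic to $\z\times\z$. The second cannot occur because, by \cite{bs}, an ascending HNN extension of a finitely generated free group is residually finite, so $G$, and hence $H\leq_f G$ (or equally $H$ itself, which is again such an HNN extension by Proposition 4.3(iii)), is residually finite; but in this case $H$ is non-abelian, and a non-abelian residually finite group is never NARA by the remark following Definition 3.3. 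So $H$ is large, and since a finite index subgroup of $H$ mapping onto a non-abelian free group has finite index in $G$, the group $G$ is large. Together with the exceptions found in the reduction step, this is the statement.

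The one genuinely substantial ingredient is the exclusion of the NARA alternative of Theorem 3.6, which rests on the theorem of \cite{bs} that ascending HNN extensions of finitely generated free groups are residually finite; the remaining steps — the reduction via \cite{kpnt}, pinning down the small exceptions, and the elementary fact that a group containing $F_2$ is not virtually $\z\times\z$ — are routine.
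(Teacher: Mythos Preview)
Your proof is correct and follows essentially the same route as the paper: reduce to the injective case via \cite{kpnt}, dispose of $m\leq 1$ by identifying the small exceptions, apply Corollary 4.5 to obtain $H\leq_f G$ with a commutator relator, and then use Theorem 3.6 together with residual finiteness from \cite{bs} (and Proposition 4.3(iii)) to rule out the NARA alternative. Your treatment of the $m\leq 1$ exceptions is in fact slightly more explicit than the paper's, but otherwise the arguments coincide.
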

\begin{proof}
We can assume without loss of generality that $\theta$ is injective
because if not then we
can replace $\theta$ with $\tilde{\theta}$ which is
an injective endomorphism of a free group $F_m$ with $m\leq n$, and then $G$
is still equal to the mapping torus of $F_m$ using $\tilde{\theta}$ and this
will contain $\z\times\z$. Hence we are in the case of
Corollary 4.5 which allows us to apply Theorem 3.6 to 
$H\leq_f G$. As $\z\times\z\leq G$, we do not have $m=0$ and only the two
groups above for $m=1$. Otherwise $G$ and hence $H$ contain a
non-abelian free group for $m\geq 2$
so $H$ is not in case (i) of Theorem 3.6. By Proposition
4.3 (iii) $H$ is an injective mapping torus of a finitely generated free
group endomorphism and so the recent result \cite{bs} 
of Borisov and Sapir tells us that $H$ is residually finite, so it is not 
NARA. Thus $H$ and $G$ are large.
\end{proof}

Although it might be said that one only requires the NARA property to apply
Theorem 3.6 and not the full force of residual finiteness, it should be
pointed out that there are mapping tori $G$ of injective endomorphisms of the
free group $F_2$ such that the abelianisation $\overline{G}=\z\times\z$ and
such that for any finite index subgroup $N$ which is normal in $G$ with
$G/N$ soluble, we have $\overline{N}=\z\times\z$.

We finish this section by looking at those mapping tori $G$ of endomorphisms
of free groups which contain an arbitrary Baumslag-Solitar subgroup. Our 
results are not quite definitive because we need $\beta_1(G)\geq 2$ in
order to apply our methods and we cannot show that $G$ necessarily has a
finite index subgroup with that property. However this is the only obstacle
to largeness.
\begin{thm}
If $G=\langle\Gamma,t\rangle$ is a mapping torus of an endomorphism
$\theta$ of the free group $\Gamma$ of rank $n$ which contains a 
Baumslag-Solitar subgroup $BS(j,k)$ then either $G$ is large or $G=BS(1,k)$ 
or $\beta_1(H)=1$ for all $H\leq_f G$.
\end{thm}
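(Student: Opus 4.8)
The plan is to peel $G$ down to the ascending-HNN setting of this section and then run the largeness criteria of Sections~2 and~3 against the rigidity forced by the Baumslag--Solitar subgroup. As in the proof of Corollary~4.6, if $\theta$ is not injective replace it (via \cite{kpnt}) by an injective endomorphism $\tilde\theta$ of a free group $F_m$ with $m\le n$, which changes neither $G$ nor the fact that $G$ contains $BS(j,k)$. If $m\le1$ then $G$ is $\z$ or $BS(1,k')$ for some $k'\ne0$; since $\z$ has no Baumslag--Solitar subgroup we land in the case $G=BS(1,k)$. So assume $\Gamma=F_m$ is non-abelian. By the discussion preceding the statement $G$ then contains $BS(1,k)$ for some $k\ne0$; if $|k|=1$ then $\z\times\z\le G$ and Corollary~4.6 applies, its exceptional conclusion being excluded because $G$ contains a non-abelian free subgroup, so $G$ is large. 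Hence we may assume $|k|\ge2$, so that $\theta$ has a periodic conjugacy class with $\theta^i(w)$ conjugate to $w^k$.

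Next, if $\beta_1(H)=1$ for every $H\le_fG$ we are in the third alternative, so suppose some finite index subgroup has first Betti number at least $2$. Intersecting it with $\langle\Gamma,t^i\rangle$ and with enough of its conjugates, we get a normal $H\unlhd_fG$ with $\beta_1(H)\ge2$, and by Proposition~4.3(iii) $H$ is itself a mapping torus $\langle\Delta,s\rangle$ of an injective endomorphism $\psi$ of a finitely generated free group $\Delta$ of rank $p\ge2$ (as $H$ contains a non-abelian free group). Since $H$ still contains a Baumslag--Solitar subgroup, after replacing $s$ by a suitable power (legitimate by Proposition~4.3(ii),(iv)) and conjugating the stable letter we may assume $\psi(w')=w'^{k'}$ for some $w'\in\Delta\setminus\{1\}$ with $|k'|\ge2$; a stable commutator length argument (using that $\psi$ cannot increase stable commutator length while $\mathrm{scl}(w'^{k'})=|k'|\,\mathrm{scl}(w')$, and that nontrivial elements of a non-abelian free group have positive stable commutator length) shows $[w']\ne0$ in $H_1(\Delta;\q)$, so $\psi_*$ has $k'$ as an eigenvalue there, while $\beta_1(H)\ge2$ forces $1$ to be an eigenvalue of $\psi_*$ too.

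It now suffices to prove $H$ is large, and I would split on whether $\psi$ has a periodic conjugacy class of ratio $\pm1$. If $\psi^j(v)$ is conjugate to $v^{\pm1}$ with $v\ne1$, then (passing to a power of $\psi$ and conjugating the stable letter so the monodromy fixes $v$, then running the argument of Theorem~4.4 and Corollary~4.5) there is $\Lambda\le_f\Delta$, a power $\sigma$ of $s$, and a free basis of $\Lambda$ containing $v$ with $\langle\Lambda,\sigma\rangle$ an ascending HNN extension with base $\Lambda$ whose standard deficiency $1$ presentation has the commutator $[\sigma,v]$ among its relators; moreover $\langle\Lambda,\sigma\rangle\le_fG$ by Proposition~4.3(iv), it contains a non-abelian free subgroup, and it is residually finite by Borisov--Sapir \cite{bs}, hence not NARA, so Theorem~3.6 makes it, and therefore $G$, large. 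In the remaining case I would invoke Theorem~2.1 through a non-canonical homomorphism: pick a primitive $\psi_*$-invariant functional $\vec e\colon H_1(\Delta;\z)\to\z$ (available since $1$ is an eigenvalue of $\psi_*$) and let $\rho\colon H\to\z$ be the surjection with $\rho|_\Delta=\vec e$ and $\rho(s)=0$. Then $E=\ker(\vec e)$ is a $\psi$-invariant free subgroup of $\Delta$ of infinite rank, $\ker\rho=\langle E,s\rangle$ is an ascending HNN extension of $E$, and the Mayer--Vietoris sequence gives $\beta_1(\ker\rho;\q)=1+\dim_\q\mathrm{coker}\bigl((\psi|_E)_*-\mathrm{id}\bigr)$ acting on $H_1(E;\q)$. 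Viewing $H_1(E;\q)$ as a finitely generated free $\q[z^{\pm1}]$-module of rank $p-1$ (with $z$ translation by any $\delta\in\Delta$ having $\vec e(\delta)=1$) and $(\psi|_E)_*$ as multiplication by a matrix $B$ over $\q[z^{\pm1}]$, this cokernel is infinite dimensional over $\q$ precisely when $\det(\mathrm{id}-B)=0$. The crux, and the step I expect to be the real obstacle, is to prove this vanishing: this is exactly where the combination of hypotheses ($\psi_*$ has eigenvalue $1$, yet there is no ratio $\pm1$ periodic conjugacy class, while there is a ratio $k'$ periodic class with $|k'|\ge2$) must be used essentially. Once it is established, $\beta_1(\ker\rho;\q)$ is infinite and $G$ is large by Corollary~2.2.
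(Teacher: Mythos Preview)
Your proposal has a self-acknowledged gap in Case~B: you reduce everything to the vanishing of $\det(I-B)$ over $\q[z^{\pm 1}]$ and then say this ``is the step I expect to be the real obstacle''. It is indeed an obstacle, and in fact the vanishing over $\q$ need not hold. Your own computation shows $(I-B)[w']=(1-k')[w']$ with $[w']\ne 0$ and $1-k'\ne 0$, so $[w']$ is \emph{not} in the kernel of $I-B$; there is no evident mechanism producing a $\q(z)$-kernel vector, so the rational Alexander polynomial with respect to $\rho$ can very well be nonzero.

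The paper avoids your dichotomy altogether. The missing idea is twofold. First, run the Marshall Hall pullback argument of Theorem~4.4 directly on the periodic element $w$ with $\theta(w)=w^d$ (not only in the ratio $\pm 1$ case): one still gets $\Delta\le_f\Gamma$ with $w$ in a free basis and $\theta^j(\Delta)\le\Delta$, hence a finite-index ascending HNN extension $J$ whose standard deficiency~$1$ presentation contains the relator $sws^{-1}w^{-e}$ with $|e|\ge 2$ (and one arranges $e\ne 2$ by squaring). Second, using $\beta_1(J)\ge 2$, choose a surjection $\chi:J\to\z$ with $\chi(s)=0$; since $w$ is a basis element, the column of the square Alexander matrix coming from $sws^{-1}w^{-e}$ is $(1-e)$ in the $w$-row and $0$ elsewhere. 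Now apply the note after Theorem~2.1 with $\mathbb F=\z/p\z$ for a prime $p\mid(1-e)$: that column vanishes, the mod~$p$ Alexander polynomial is zero, and $J$ (hence $G$) is large. The switch from $\q$ to $\z/p\z$ coefficients is exactly what converts your non-vanishing $(1-k')[w']$ into a genuine kernel vector, and the Marshall Hall step is what guarantees this kernel vector is visible as a basis element (so you do not need your scl argument, nor any control of $[w']$ modulo~$p$). With these two ingredients your Case~A/Case~B split becomes unnecessary.
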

\begin{proof}
As usual we assume that $\theta$ is injective. We know that
$G$ can only contain Baumslag-Solitar subgroups of type $BS(1,k)$ or $BS(k,k)$
for $k\neq 0$ and as we have already covered those which contain $BS(1,1)$, we
need only consider $BS(1,k)\leq G$ for $k\neq\pm 1$. If there is some
$H\leq_f G$ with $\beta_1(H)\geq 2$ then we can replace $G$ by $H$ because
$H$ is a mapping torus by Proposition 4.3 (iii) and $BS(1,k)\cap H
\leq_fBS(1,k)$ so $H$ contains some Baumslag-Solitar group too.
Therefore we are looking
at the situation where we have a periodic conjugacy class of the form $w\in F_n
\backslash\{1\}$ and $i>0$ with $\theta^i(w)$ conjugate to $w^d$ for some
$d\neq\pm 1$. Just as in 
the $\z\times\z$ case, we drop down to a finite index subgroup and change our
automorphism by an inner automorphism, so we can assume that
$\theta(w)=w^d$. Now we follow the proof of Theorem 4.4 to get $F\leq_f\Gamma$
with $\langle w\rangle$ a free factor of $F$, observing that $w\in\theta^{-1}
(F)$ so that we keep $w$ as we pull back $F$. Note that we can assume $w$ is
not a proper power by the comment before Theorem 4.4
so we can also preserve $w$ in a 
free basis each time because $w^d\in\theta\theta^{-1}(F)$ and if $w^c\in
\theta\theta^{-1}(F)$ for $0<|c|<|d|$ then the element $u\in\theta^{-1}(F)$
with $\theta(u)=w^c$ cannot be a power of $w$ but $\theta(u^d)=\theta(w^c)$,
hence contradicting injectiveness. Thus $w^d$ can be extended to a free basis
for $\theta\theta^{-1}(F)$ by \cite{ls} Proposition I.3.7
meaning that $w$ will be in the corresponding basis
for $\theta^{-1}(F)$.

We can now work to obtain an equivalent version of Corollary 4.5. Having gone
from $G$ to the finite index subgroup $H$ which is an ascending HNN extension
of $\Gamma$ via the injective homomorphism $\theta$, we see as before that by
repeatedly pulling back $F$ we obtain $\Delta\leq_f\Gamma$ which has a free
basis including $w$ and with $\theta^j(\Delta)\leq\Delta$. Hence the HNN
extension $J$ of $\Delta$ using $\theta^j$ with stable letter $s$ has
finite index in $H$, as well as a deficiency 1 presentation that
includes the relator $sws^{-1}w^{-e}$ for $e\neq 0,\pm 1$. We will also
require later that $e\neq 2$ and this can be obtained by taking the subgroup
of $J$ of index 2 as in Proposition 4.3 (ii) so that now the relator would
be $ sws^{-1}w^{-4}$. Now consider taking a surjective
homomorphism $\chi$ from $J$ to $\z$ (which must send $w$ to 0). 
If $\beta_1(J)$ were 1 then the only available $\chi$ would be the homomorphism
associated to this HNN extension so it would send $s$ to
1 (or $-1$). However if not then we can find $\chi'\neq\chi$ as we
have $\beta_1(H)\geq 2$ and hence $\beta_1(J)\geq 2$ because $J\leq_f H$.
Hence we have a non-trivial homomorphism $\chi'-k\chi$ that sends $s$ to 0
(which can be made surjective by multiplying 
by the right constant) where $k$ is $\chi'(s)$. On
evaluating the Alexander polynomial of $J$ with respect to this homomorphism,
we proceed as in Theorem 3.1 and discover that the group relation
$sws^{-1}w^{-e}$ becomes the module relation $(1-e)w$ when rewritten and
abelianised. Thus we have a column in our square presentation matrix
consisting of all zeros except $1-e$ in the row corresponding to the
generator $w$. Thus if we apply Theorem 2.1 using the field $\z/p\z$ with $p$
a prime dividing $1-e$ then our Alexander polynomial is zero so we have
largeness for $J$, and hence for $G$.
\end{proof}

Although we do not have a proof that a mapping torus of a 
free group endomorphism containing a Baumslag-Solitar subgroup of infinite
index has a finite index subgroup with first Betti number at least two, the
statement of Theorem 4.7 is still useful in a practical sense because if
we are presented with a particular group $G$ of this form that we would
like to prove is large, we can enter the presentation into a computer and
ask for the abelianisation of its low index subgroups. As soon as we see one
with first Betti number at least two, we can conclude largeness. 
Note that in \cite{kp} it is conjectured that a mapping torus of a free
group endomorphism is word hyperbolic if it does not contain Baumslag-Solitar
subgroups, and if this and the above question on having a finite index 
subgroup with first Betti number at least two are both true then we have
proved largeness for all the non word hyperbolic ascending HNN extensions
of finitely generated free groups (with the obvious exception of the soluble
Baumslag-Solitar groups).

We can even say something if $G$ is a mapping torus of an injective
endomorphism of an infinitely generated free group in the case when $G$ is
finitely generated, thanks to the power of \cite{fh} by using Theorem 4.2.
We immediately see that either $G$ has
deficiency at least two and so is large, or $l=0$ in which case $G$ is also
a mapping torus of an endomorphism of a finitely generated free group 
and so the results of this section apply. 

\section{Free by Cyclic Groups}

If in the previous section we use an automorphism $\alpha$ of a free group
$F$ to form our mapping torus, we obtain a semidirect product $F\rtimes
_\alpha\z$ and every free-by-$\z$ group is of this form. We already have
largeness for a range of these groups.
\begin{thm}
If the finitely generated group $G$ is free-by-$\z$ then $G$ is large if the
free group $F$ is infinitely generated, or if $\z\times\z\leq G$ and $F$
has rank at least 2.
\end{thm}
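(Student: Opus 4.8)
The plan is to split into the two cases named in the statement. For the case where $\z\times\z\leq G$ and $F=F_n$ has rank $n\geq 2$, this is essentially immediate: a free-by-$\z$ group is a mapping torus $F_n\rtimes_\alpha\z$ of the automorphism $\alpha$, which is in particular an injective endomorphism, so Corollary 4.6 applies directly. The two small exceptions $\langle x,y|xyx^{-1}=x^{\pm 1}\rangle$ in Corollary 4.6 are metabelian (indeed $BS(1,\pm 1)$), hence cannot be free-by-$\z$ with $F$ of rank $\geq 2$ — the fibre $F$ would have to be cyclic — so they do not arise here and we get largeness outright.

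The substantive case is when $F$ is infinitely generated but $G$ is finitely generated. Here I would invoke Theorem 4.2: writing $G=\langle t,F\rangle$ with associated homomorphism $\chi$ (the quotient onto the $\z$ factor), $G$ is itself a finitely generated subgroup of itself, so it has a presentation $\langle s,a_1,\ldots,a_k,b_1,\ldots,b_l\mid sa_1s^{-1}=w_1,\ldots,sa_ks^{-1}=w_k\rangle$ with $a_i,b_j\in\ker\chi$. This presentation has $k+l+1$ generators and $k$ relators, so its deficiency is $l+1$. If $l\geq 1$ then $\mathrm{def}(G)\geq 2$ and $G$ is large by the Baumslag--Pride theorem \cite{bp}, and we are done. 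So the remaining possibility is $l=0$, i.e. $G=\langle s,a_1,\ldots,a_k\mid sa_is^{-1}=w_i\rangle$ with the $w_i$ words in the $a_i$; but this exhibits $G$ as a mapping torus of an endomorphism of the \emph{finitely generated} free group $F_k=\langle a_1,\ldots,a_k\rangle$ (sending $a_i\mapsto w_i$), putting us back in the setting of Section 4.

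At this point I would feed this finite-rank mapping torus back into Corollary 4.6. The only thing needed is that $\z\times\z\leq G$. But $G$ contains $F$ which is infinitely generated free, so $F$ — and hence $G$ — contains nonabelian free subgroups of every finite rank, and more to the point $G$ contains an element $t$ of infinite order acting on $F$; since $G$ is finitely generated and $F$-by-$\z$ with $F$ infinitely generated, one expects $G$ to contain $\z\times\z$, though one must actually produce it. The honest way: after the reduction above $G$ is an ascending HNN extension of $F_k$ via $a_i\mapsto w_i$; if this endomorphism is not surjective then (by the structure of such groups, e.g. via the Bieri--Strebel / Feighn--Handel analysis) either it has a periodic conjugacy class, giving a Baumslag--Solitar and then Theorem 4.7, or one argues directly that infinite generation of the original fibre forces non-surjectivity and a fixed conjugacy class. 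The cleanest route is probably to observe that since the original fibre $F=\bigcup_i t^{-i}F_k t^i$ (after the reduction, $\ker\chi$ is this ascending union) is infinitely generated, the endomorphism $a_i\mapsto w_i$ is genuinely non-surjective, and then apply the machinery of Section 4 — specifically, a non-surjective free group endomorphism whose mapping torus is not word hyperbolic contains $\z\times\z$ or a Baumslag--Solitar subgroup, both of which yield largeness by Corollary 4.6 or Theorem 4.7 (the latter conditionally on $\beta_1(H)\geq 2$ for some $H\leq_f G$). The main obstacle I anticipate is precisely this last point: ensuring that the infinite-rank case genuinely reduces to a situation covered by Section 4 and that the relevant $\z\times\z$ (or the first Betti number condition) is actually present, rather than merely plausible — one may need to argue separately that an infinitely generated fibre cannot coexist with $\beta_1(H)=1$ for all finite-index $H$.
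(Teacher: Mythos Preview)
Your treatment of the finite-rank case is fine and matches the paper: once $F=F_n$ with $n\geq 2$, Corollary 4.6 applies directly and the small exceptions are ruled out.

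The gap is in the infinitely generated case. You correctly invoke Theorem 4.2 and dispose of $l\geq 1$ via Baumslag--Pride. But your handling of $l=0$ goes astray: you try to feed the resulting ascending HNN extension of $F_k$ back into Corollary 4.6 or Theorem 4.7, and then struggle to manufacture a $\z\times\z$ subgroup or a first Betti number condition. That route does not close. There is no general reason a strictly ascending HNN extension of $F_k$ must contain $\z\times\z$ (many such groups are word hyperbolic), and Theorem 4.7 carries exactly the Betti-number caveat you cannot discharge.

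The paper's argument is that $l=0$ simply cannot occur here. If $l=0$ then the kernel of the associated homomorphism is
\[
\ker\chi=\bigcup_{n\geq 0} s^{-n}A\,s^{n},\qquad A=\langle a_1,\ldots,a_k\rangle,
\]
an ascending union of rank-$k$ free groups. Taking rational homology commutes with this direct limit, so $H_1(\ker\chi;\q)$ is a direct limit of $k$-dimensional $\q$-vector spaces, hence $\beta_1(\ker\chi;\q)\leq k$. But the $a_i,b_j$ lie in the original kernel, so this $\chi$ agrees (up to sign) with the original projection $G\to\z$, giving $\ker\chi=F$. As $F$ is infinitely generated free, $\beta_1(F;\q)=\infty$, a contradiction. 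Hence $l\geq 1$ is forced, $\mathrm{def}(G)\geq 2$, and largeness follows immediately from \cite{bp}. You had all the pieces on the table (you even wrote down $\ker\chi=\bigcup t^{-i}F_kt^i$); what was missing was the observation that this union has \emph{bounded} rational first Betti number, which is incompatible with it being infinitely generated free.
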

\begin{proof}
If $F$ is infinitely generated then 
applying Theorem 4.2 with $H=G$ tells us that $G$ has deficiency
at least 2. This is because if $l=0$ then the kernel of the associated
homomorphism $\chi$ is 
\[\bigcup_{n=0}^\infty s^{-n}As^n\qquad\mbox{where }A=\langle a_1,\ldots ,a_k
\rangle\]
but then $\beta_1(\mbox{ker }\chi;\q)\leq\beta_1(A;\q)$ whereas we have 
$F=\mbox{ker }\chi$.

If $F$ has finite rank then this is Corollary 4.6. Note that a semidirect
product $A\rtimes B$ is residually finite if both $A$ and $B$ are residually
finite and $A$ is finitely generated, so we do not need to use \cite{bs}
when applying this corollary to $G$.
\end{proof}

In contrast G.\,Baumslag gives in \cite{nncycloc} an example of an infinitely
generated free-by-$\z$ group with every finite quotient cyclic so that
this group is not residually finite. Indeed its finite residual $R_G$ must
contain $G'$
and hence it is not large because every finite index subgroup $F$ of $G$ has
the property that all of the finite quotients of $F$
are abelian, as $F'\leq G'\leq R_G=R_F$.
This is a striking
demonstration of how largeness and residual finiteness are best suited to
finitely generated groups. He then proves in \cite{bgfin} that finitely
generated groups which are $F$-by-$\z$ for $F$ an infinitely generated free
group are residually finite. As for the residual finiteness of finitely
generated groups that are ascending HNN extensions of infinitely generated
free groups, this appears to be open (it corresponds to the case $l>0$ in 
Theorem 4.2); indeed that they are Hopfian is Conjecture 1.4 in \cite{gmsw}.
\begin{co}
If $G$ is $F_n$-by-$\z$ for $F_n$ the free group of rank $n$ then either $G$
is large or $G$ is word hyperbolic or $G=BS(1,\pm 1)$.
\end{co}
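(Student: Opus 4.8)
The plan is to break into cases according to the rank of the free kernel, using Theorem 5.1 together with the known characterisation of the non--word hyperbolic free-by-$\z$ groups. First I would dispose of the degenerate ranks directly. If $n=0$ then $G=\z$, which is virtually cyclic and hence word hyperbolic. If $n=1$ then the defining automorphism of $\z=\langle a\rangle$ is $\pm 1$, since $\mathrm{Aut}(\z)=\{\pm 1\}$, so $G=\langle a,t|tat^{-1}=a^{\pm 1}\rangle=BS(1,\pm 1)$; here $BS(1,1)=\z\times\z$ and $BS(1,-1)$ is the Klein bottle group, so these really are exceptional, being neither large nor word hyperbolic.

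Now assume $n\geq 2$, so that $F_n$ is non-abelian and $G$ is a finitely generated free-by-$\z$ group whose free part has rank at least $2$. Either $G$ contains a subgroup isomorphic to $\z\times\z$ or it does not. If it does not, then $G$ is word hyperbolic: this is precisely the statement, recalled in the introduction and proved in \cite{bf}, \cite{bfad} and \cite{brink}, that the groups of the form $F_n$-by-$\z$ which fail to be word hyperbolic are exactly those containing $\z\times\z$. If instead $\z\times\z\leq G$, then since the free kernel has rank $n\geq 2$ we apply Theorem 5.1 directly to conclude that $G$ is large. Assembling the cases, $G$ is large, or word hyperbolic, or isomorphic to $BS(1,\pm 1)$, as required.

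Since this is essentially an assembly of results already established, there is no genuine obstacle. The only points needing a little attention are the small-rank bookkeeping above (in particular that rank $1$ yields exactly the two Baumslag-Solitar exceptions and nothing more) and the observation that one does not need Corollary 4.6 here: Theorem 5.1 already handles the free-by-$\z$ situation with a $\z\times\z$ subgroup, and the appeal to \cite{bs} underlying Corollary 4.6 is unnecessary in the automorphism case, because a semidirect product $F_n\rtimes\z$ with $F_n$ finitely generated and residually finite is automatically residually finite, as noted after Theorem 5.1.
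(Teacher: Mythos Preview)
Your proof is correct and follows the same route as the paper: use the Bestvina--Feighn/Brinkmann characterisation that $F_n$-by-$\z$ is word hyperbolic precisely when it contains no $\z\times\z$, and invoke Theorem 5.1 in the remaining case. The paper's proof is terser and leaves the small-rank cases $n=0,1$ implicit, whereas you spell them out explicitly; this is a helpful addition but not a different approach.
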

\begin{proof}
It is known by \cite{bf}, \cite{bfad} and \cite{brink} that such a $G$ being
word hyperbolic is equivalent to $G$ containing no subgroups isomorphic to 
$\z\times\z$ and also to the automorphism $\alpha$ having no periodic conjugacy
classes.
\end{proof}

In fact the equivalence of the last two notions can be proved directly by
quoting the classical result of Higman \cite{hg} which says that an
automorphism of a free group that maps a finitely generated subgroup into
itself maps it onto itself. So if $\alpha$ has a periodic conjugacy class
we can assume there is $i>0$ and $w\in F_n\backslash\{1\}$ such that
$\theta^i(w)$ is conjugate to $w^{\pm 1}$.

This leaves us with an important question:
\begin{qu}
If $G$ is $F_n$-by-$\z$ for $n\geq 2$ and $G$ is a word hyperbolic group
then is $G$ large?
\end{qu}

As for whether the six consequences of largeness given in the introduction
hold for these groups $G$, the first is obvious whereas it is unknown if
$G$ has superexponential subgroup growth or has infinite virtual first Betti
number: Question 12.16 by Casson in \cite{bpr} is equivalent to asking whether
there exists $H\leq_f G$ with $\beta_1(H)\geq 2$.
However being word
hyperbolic means that the other properties are known to hold, giving us a
definitive result for these three cases.
\begin{thm} If $G$ is finitely generated and is virtually free-by-$\z$ then
for all finitely generated subgroups $H$ of $G$ we have:\\
(i) $H$ has word problem solvable strongly generically in linear time\\
(ii) $H$ has uniformly exponential growth\\
(iii) $H$ is SQ-universal\\
unless $H$ is virtually $S$ for $S=\z$ or $\z\times\z$.
\end{thm}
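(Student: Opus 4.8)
The claim is a classification-type result: a finitely generated group $G$ that is virtually free-by-$\z$ satisfies the three listed properties on all its finitely generated subgroups, with the only exceptions being when those subgroups are virtually cyclic or virtually $\z\times\z$. My plan is to reduce everything to a dichotomy: either a relevant subgroup is word hyperbolic (in which case (i), (ii), (iii) are known from the literature on hyperbolic groups), or it is ``large'' in the sense of Section 2, in which case (i), (ii), (iii) follow from the consequences of largeness recorded in the introduction. The small exceptions $\z$ and $\z\times\z$ are precisely the virtually-cyclic and virtually-$\z^2$ cases that are neither hyperbolic-in-the-useful-sense nor large.

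First I would dispose of the ``virtually'' wrapper. If $G$ is virtually free-by-$\z$, pick $G_0\leq_f G$ with $G_0$ free-by-$\z$, say $G_0$ is $F_n$-by-$\z$. For a given finitely generated $H\leq G$, the intersection $H\cap G_0$ has finite index in $H$ and is itself free-by-$\z$ (a finitely generated subgroup of a free-by-$\z$ group is free-by-$\z$, since the intersection with the free normal subgroup is free and the quotient embeds in $\z$). So it suffices to prove the statement for $H$ itself free-by-$\z$ and then note that all three conclusions pass between a group and its finite-index subgroups (SQ-universality and uniform exponential growth are commensurability invariants; for the generic linear-time word problem this is also stable under finite index). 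So reduce to: $H$ is $F_m$-by-$\z$, finitely generated; show (i)--(iii) hold unless $H$ is virtually $\z$ or $\z\times\z$.

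Next I would run the trichotomy of Corollary 5.2. If $F_m$ is infinitely generated, then by Theorem 5.1 $H$ is large, and we are done by the list of consequences in the introduction. If $F_m$ has finite rank $m$: if $m=0$ then $H=\z$, an excluded case; if $m=1$ then $H=BS(1,\pm1)$, which is virtually $\z\times\z$ (indeed $BS(1,1)=\z\times\z$ and $BS(1,-1)$ is its Klein-bottle relative), again excluded. If $m\geq2$, apply Corollary 5.2: $H$ is either word hyperbolic, or contains $\z\times\z$, or is $BS(1,\pm1)$ (the last impossible for $m\ge 2$). In the hyperbolic case, (i) is the strongly-generic linear-time solvability of the word problem for hyperbolic groups, (ii) is the theorem that non-elementary hyperbolic groups have uniformly exponential growth (and elementary hyperbolic groups are virtually cyclic, an excluded case, or finite, which cannot happen here as $\beta_1(H)\geq1$), and (iii) is that non-elementary hyperbolic groups are SQ-universal. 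In the remaining case $\z\times\z\leq H$ with $m\geq2$, Corollary 4.6 gives that $H$ is large (it cannot be one of the two soluble groups listed there since $m\geq 2$ forces a non-abelian free subgroup), so again the introduction's consequences apply.

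**The main obstacle.** The genuine content is organising the case analysis so that the ``virtually'' reduction really does land us, for \emph{every} finitely generated $H$, in one of the clean cases — in particular confirming that passing to $H\cap G_0$ does not destroy finite generation of the ambient structure and that the excluded cases $\z,\z\times\z$ match up exactly with the elementary-hyperbolic and $BS(1,\pm1)$ exceptions. The rest is citation: largeness $\Rightarrow$ (i),(ii),(iii) from Section 1, and hyperbolicity $\Rightarrow$ (i),(ii),(iii) from standard results (Dehn's algorithm / Epstein--Holt for the word problem, Koubi and de la Harpe for uniform exponential growth, Ol'shanskii / Delzant for SQ-universality). I do not expect any step to require new ideas; the work is purely in the bookkeeping of which exception is which.
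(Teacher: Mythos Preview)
Your proposal is correct and follows essentially the same route as the paper: reduce via the finite-index free-by-$\z$ subgroup, observe that a finitely generated subgroup of a free-by-$\z$ group is free or free-by-$\z$, and then invoke Theorem 5.1/Corollary 5.2 to land in the large or non-elementary word hyperbolic case, citing the same standard references (Ol'shanski\u\i, Koubi, \cite{kmss}, \cite{har}) for (i)--(iii) and noting these properties pass up and down finite index. The only cosmetic difference is that the paper establishes the dichotomy for the ambient group first and then does the subgroup and virtual reductions, whereas you do the reductions first; the content is the same.
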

\begin{proof}
We have shown by Theorem 5.1 and Corollary 5.2 that any free-by-$\z$ group
$G$ which is finitely generated (excepting $BS(1,\pm 1)$ and $\z$) is large
or is non-elementary word hyperbolic. This implies SQ-universality (by
\cite{ol} for the hyperbolic case and \cite{p1} when $G$ is large) and
uniformly exponential growth (by \cite{koubi} for the hyperbolic case and
\cite{har} when we have largeness).
Then Corollary 4.1 in \cite{kmss} (where
we also have the relevant definitions) proves (i) if $G$ has a finite index
subgroup with a non-elementary word hyperbolic quotient. 
Moreover if $G\leq_f\Gamma$ then the properties (i), (ii) and (iii) hold for
$\Gamma$ as well, by \cite{kmss} for (i), \cite{har} for (ii) and \cite{pmn}
for (iii).

Finally if $H\leq G$ where $G$ is free-by-$\z$ then $H/(H\cap F)\cong HF/F
\leq G/F=\z$ so either this is trivial with $H\leq F$ hence $H$ is free, or
it is isomorphic to $\z$ and so $H$ is an extension of the free group
$H\cap F$ by $\z$ 
(although if $F$ is finitely generated then $H\cap F$ is not necessarily
finitely generated if $H$ has infinite index in $F$).
Thus if $H$ is finitely generated then (i), (ii), (iii)
or the exceptions hold for $H$ too, and if $L$ is a finitely generated
subgroup of the virtually free-by-$\z$ group $\Gamma$ with $G\leq_f\Gamma$ 
then $L\cap G\leq_f L$ so $L\cap G$ is a finitely generated 
subgroup of $G$, hence we gain our properties 
for $L\cap G$ and then also for $L$.
\end{proof}

\section{1 Relator Groups}
Groups having a finite presentation with only 1 relator have been much studied.
It is known that such a group contains a non-abelian free group unless it is
isomorphic to $BS(1,m)$ or is cyclic, see \cite{ls} II Proposition 5.27 and
\cite{wil}. Indeed if the 
presentation has at least 3 generators then we know by \cite{bp} that the
group is large so we need only concern ourselves here with 2 generator 
1 relator presentations $\langle a,b|r\rangle$. Largeness is also known by
\cite{gr} and \cite{st} when $r$ is a proper power, which is exactly when
the group has torsion, but for $l,m$ coprime we have by \cite{ep2} that
$BS(l,m)$ is not large as it has virtual first Betti number equal to 1, and
similarly Example 3.5 (iii) is not large. Thus another direction in which to
go when looking for large 2 generator 1 relator groups is if $r$ is in the 
commutator subgroup of $F_2$, as at least that gives first Betti number equal
to 2. The starting case to be considered here should be to take $r$ actually
equal to a commutator and application of Theorem 3.6 gives us a near
definitive result.
\begin{co}
If $G=\langle a,b|uvUV\rangle$ where $u$ and $v$ are any elements of $F_2=
\langle a,b\rangle$ with
$uvUV$ not equal to $abAB$, $baBA$ or their cyclic conjugates when reduced and
cyclically reduced then $G$ is large or is NARA.
\end{co}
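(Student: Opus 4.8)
The plan is to apply Theorem 3.6 to $G=\langle a,b\mid uvUV\rangle$, so the only thing to check is that $G$ is not $\z\times\z$; then conclusion (i) of Theorem 3.6 is excluded and we are left with (ii) or (iii), i.e. $G$ is NARA or large. So the real content is: if $uvUV$ is a reduced and cyclically reduced word which (after cyclic reduction) is \emph{not} a cyclic conjugate of $abAB$ or $baBA$, then $\langle a,b\mid uvUV\rangle\neq\z\times\z$.

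First I would reduce to the case where $r=uvUV$ is already cyclically reduced; replacing $r$ by a cyclic conjugate does not change $G$, and the hypothesis is phrased exactly so that this reduction is harmless. Next, since $r\in F_2'$ we automatically have $\beta_1(G)=2$, so the abelianisation is $\z\times\z$ and $G=\z\times\z$ would force $r$ to be a relator defining the \emph{trivial} extra relation, i.e. $r$ must be a product of conjugates of $[a,b]^{\pm1}$ in $F_2$. The key classical input here is the solution to the problem of which single-relator quotients of $F_2$ are free abelian: a one-relator group $\langle a,b\mid r\rangle$ with $r$ cyclically reduced equals $\z\times\z$ if and only if $r$ is conjugate in $F_2$ to $[a,b]^{\pm1}$, and being cyclically reduced this means $r$ is literally a cyclic conjugate of $abAB$ or $baBA$. (One can see this via the Freiheitssatz / Magnus's theorem, or simply because $\z\times\z$ has a deficiency-$1$ presentation and a one-relator presentation of $\z\times\z$ must have the relator in the normal closure of $[a,b]$ with the quotient still one-relator, forcing $r\sim[a,b]^{\pm1}$; alternatively the standard $2$-complex of $\langle a,b\mid r\rangle$ is aspherical when $r$ is not a proper power, and having $\pi_1=\z\times\z$ forces it to be homotopy equivalent to the torus, whence $r$ is primitive-commutator type.)

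So the steps in order are: (1) cyclically reduce $r$, harmless by hypothesis; (2) invoke Theorem 3.6, which says $G$ is $\z\times\z$, NARA with abelianisation $\z\times\z$, or large; (3) rule out the first case using the characterisation above together with the hypothesis that the cyclically reduced form of $uvUV$ is not a cyclic conjugate of $abAB$ or $baBA$; (4) conclude that $G$ is NARA or large. The main obstacle, such as it is, is step (3): one must be careful that "$uvUV$ not equal to $abAB$, $baBA$ or their cyclic conjugates when reduced and cyclically reduced" is precisely the combinatorial translation of "$r$ is not conjugate in $F_2$ to $[a,b]^{\pm1}$", and then cite the appropriate known result (e.g. from \cite{ls}) that this is equivalent to $\langle a,b\mid r\rangle\not\cong\z\times\z$. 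Everything else is a direct appeal to Theorem 3.6.
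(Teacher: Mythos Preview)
Your proposal is correct and follows essentially the same approach as the paper: apply Theorem 3.6 and rule out case (i) by invoking the classical characterisation of when a one-relator quotient of $F_2$ is $\z\times\z$. The paper states this characterisation in the equivalent form ``$u,v$ form a free basis for $F_2$'' and cites \cite{kms} Theorem 4.11 rather than the alternative justifications you sketch, but the argument is the same.
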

\begin{proof}
It is well known that $G=\z\times\z$ if and only if the relator is of the 
above form
(equivalently if and only if $u,v$ form a free basis for $F_2$); see for
instance \cite{kms} Theorem 4.11. Otherwise
we are in Theorem 3.6 case (ii) or (iii).
\end{proof}
\begin{qu} If $G=\langle a,b|uvUV\rangle$ then can $G$ be NARA?
\end{qu}

No examples are known to us. This is an important question because a yes
answer gives us a non residually finite 1 relator
group with the relator a commutator, the existence of
which is Problem (OR8) in the problem list at \cite{nypb} (however there
it is shown that non residually finite
examples exist if the relator is merely in the commutator
subgroup) and a no answer gives us largeness.
We can prove that we do not have NARA groups in a whole range of cases.
\begin{prop}
If $G=\langle F_2|uvUV\rangle$ then $G$ can only be NARA if 
$u,v\notin F_2'$ with the images of $u$ and $v$ generating the
abelianisation $\z\times\z$ of $F_2$ and such that $u$ is a free basis element 
for $F_2$ or $G_u=\langle F_2|u\rangle$ is NARA, along with the
same condition for $v$.
\end{prop}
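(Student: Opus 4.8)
The plan is to squeeze information out of two facts: that $G$ surjects onto each of the one relator groups $G_u=\langle F_2\mid u\rangle$ and $G_v=\langle F_2\mid v\rangle$ (killing $u$ in $G$ also kills the relator $uvUV=uvu^{-1}v^{-1}$, and symmetrically for $v$), and that $G$, being NARA, has no non-abelian finite quotient by Proposition 3.4. Write $\overline u=(u_1,u_2)$ and $\overline v=(v_1,v_2)$ for the images of $u,v$ in $ab(F_2)=\z\times\z$ and put $\delta=u_1v_2-u_2v_1$, so that $|\delta|$ is the index of $S=\langle\overline u,\overline v\rangle$ in $\z\times\z$ whenever $\delta\neq 0$.

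The first and crucial step is to show $\delta=\pm 1$; this at once gives $u,v\notin F_2'$ (since then $\overline u$ and $\overline v$ are both nonzero) and $S=\z\times\z$. The point here is to use nilpotent finite quotients directly rather than Theorem 3.1 or 3.6, the latter only ever yielding that $S$ has finite index and then circling back on itself under passage to a finite index subgroup. For a prime $p$ let $\mathbb{H}_p=\langle a,b\mid a^p,\ b^p,\ [a,b]^p,\ [[a,b],a],\ [[a,b],b]\rangle$, the non-abelian Heisenberg group of order $p^3$. Since $\mathbb{H}_p$ is nilpotent of class two, the image of the relator $uvUV=[u(a,b),v(a,b)]$ in $\mathbb{H}_p$ equals $[a,b]^{\delta}$, which is trivial whenever $p$ divides $\delta$, and trivial for every $p$ when $\delta=0$. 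Hence if $\delta$ were $0$, or had any prime divisor $p$, then $a\mapsto a$, $b\mapsto b$ would define a surjection of $G$ onto the non-abelian finite group $\mathbb{H}_p$, contradicting that $G$ is NARA. Therefore $\delta=\pm 1$, as wanted.

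For the remaining assertion I would examine $G_u=\langle F_2\mid u\rangle$. Being a quotient of $G$ it has no non-abelian finite quotient, so by Proposition 3.4 it is either abelian or NARA, and in the latter case the second alternative of the claim holds. In the abelian case, note that the abelianisation of $G_u$ is $\z\times\z/\langle\overline u\rangle$, which by the first step has torsion-free rank $1$ since $\overline u\neq 0$; using the standard facts that a one relator group has torsion only when its relator is a proper power (the torsion then being cyclic) and Magnus' theorem that $F_2/\langle\langle u\rangle\rangle\cong\z$ forces $u$ to be a free basis element of $F_2$, one checks that an abelian $G_u$ must in fact be $\z$, whence $u$ is a free basis element. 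The identical argument applied to $v$ finishes the proof.

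I expect the main obstacle to be the first step: the natural reflex is to reason only through finite index subgroups and Theorem 3.6, which stalls, whereas the productive observation is that divisibility of $\delta$ directly manufactures finite non-abelian (nilpotent) quotients. The passage from ``$G_u$ abelian'' to ``$u$ a free basis element'' is routine once the one relator facts above are in hand, although one must take a little care to rule out abelianisations of the shape $\z\oplus\z/m$ with $m\geq 2$, and the possibility $u\in F_2'$, the latter already being excluded by the first step.
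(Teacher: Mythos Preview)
Your argument is correct and takes a genuinely different route from the paper for the crucial step of showing that $\overline u,\overline v$ generate all of $\z\times\z$. The paper splits this into two pieces: first, if $\langle\overline u,\overline v\rangle$ has infinite index then Theorem~3.1 yields largeness; second, to pass from finite index to index one, it arranges a free basis $\alpha,\beta$ with $\overline u=k\alpha$, $\overline v=l\beta$ and then, for $k>1$, builds an explicit non-abelian representation $F_2\to SL(2,\mathbb C)$ sending $\alpha\mapsto\mathrm{diag}(e^{\pi i/k},e^{-\pi i/k})$ and $\beta$ to a unipotent, so that the (metabelian, upper-triangular) image makes $u$ and $v$ commute, and then appeals to Proposition~3.4(iii) via residual finiteness of linear groups. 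Your Heisenberg-group argument handles both cases in one stroke: it is more elementary (no appeal to Theorem~3.1 or to linear groups), produces an explicit \emph{finite} non-abelian quotient, and rests only on the bilinearity of the commutator in class-two nilpotent groups. For the dichotomy on $G_u$ both proofs use the surjection $G\twoheadrightarrow G_u$ in the same way, and the fact that $G_u\cong\z$ forces $u$ to be primitive; your observation that $\delta=\pm1$ already makes $\overline u$ primitive, so that an abelian $G_u$ is automatically $\z$ and the case $\z\oplus\z/m$ never arises, actually streamlines your own final paragraph.
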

\begin{proof}
If the images of $u$ and $v$ do not generate the homology of $F_2$ up to finite
index then we are done by Theorem 3.1. Now suppose that $G_u$ is not
NARA or $\z$ (the latter happening if and only if $u$ is an element of a
free basis for $F_2$) then as $G$ surjects to $G_u$ we see that a non-abelian 
finite image of $G_u$ is also an image of $G$. Then swap $u$ and $v$.

Otherwise we can take a free basis $\alpha,\beta$ for $F_2$ such that 
there are $k,l
\geq 1$ with $u$ equivalent to $\alpha^k$ in homology and $v$ to $\beta^l$. 
If $k>1$
then consider the homomorphism of $F_2$ into the linear (hence residually
finite) group $SL(2,\mathbb C)$ given by
\[\alpha\mapsto \left( \begin{array}{cc} e^{\pi i/k} & 0 \\
0 & e^{-\pi i/k} \end{array} \right), \qquad
\beta\mapsto \left( \begin{array}{rr} 1 & 1 \\ 0 & 1\end{array} \right).\]
This is non-abelian but does make $u$ and $v$ commute so we are done by
Proposition 3.4 (iii).
\end{proof}

In fact there are other ways to conclude that $G=\langle F_2|uvUV\rangle$ is 
not NARA and hence large, for instance the powerful algorithm
of K.\,S.\,Brown in \cite{bks}, which determines whether a 2 generator 
1 relator group is a mapping torus of an injective endomorphism of a finitely
generated group (which must necessarily be free), can be used 
(along with \cite{bs} proving that such groups are residually finite). If all
else fails then there is the option of using the computer to find the
abelianisation of some low index subgroups of $G$ and look for one which is not
$\z\times\z$ in order to obtain largeness. For instance in \cite{ephd}
it is shown that 
\[G=\langle a,b|a^{k_1}b^{l_1}a^{k_2}b^{l_2}a^{k_3}b^{l_3}\rangle\]
for $k_1k_2k_3,l_1l_2l_3\neq 0$ and $k_1+k_2+k_3=l_1+l_2+l_3=0$ is large
for all possibilities except for one particular relator 
(on which we use the computer to find a finite index
subgroup $H$ of the form in Theorem 3.6 and with $\beta_1(H)>2$) and
two infinite families of relators (these are of the required form so we can
use Brown's algorithm) thus we have shown 
that the remaining cases are all large.

We finish this section by mentioning a conjecture of P.\,M.\,Neumann in
\cite{pmn} from 1973: that a 1 relator group is either SQ-universal or is
isomorphic to $BS(1,m)$ or cyclic (the next comment that ``a proof of this by
G.\,Sacredote seems to be almost complete now'' turns out with hindsight to 
be somewhat over optimistic). In addition to presentations with 3 or more 
generators or
with the relator a proper power, at least this can be seen to be true for
those 2 generator 1 relator groups $G$ which are free by cyclic, and possibly
ascending HNN extensions of free groups if the two questions at the end of
Section 4 are true, as well as
if $G$ is virtually a group of this type. We make progress on this question 
from a different direction in the next section.

\section{Residual Finiteness}

It appears that often when we have a counterexample to statements about
largeness, this is achieved by taking a group which is not residually
finite. The following straightforward observation suggests why:
\begin{prop}
A group $G$ is large if and only if the residually finite group $G/R_G$ is
large, where $R_G$ is the intersection of all the finite index subgroups
of $G$.
\end{prop}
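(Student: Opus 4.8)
The plan is to prove the two implications separately, with essentially all of the content in the forward direction. The reverse direction is immediate and uses no special property of $R_G$: if $G/R_G$ is large then some finite index subgroup $\bar H\leq_f G/R_G$ surjects onto a non-abelian free group $F$, and the preimage $H=q^{-1}(\bar H)$ under the quotient map $q\colon G\to G/R_G$ is a finite index subgroup of $G$ (of the same index) which surjects onto $\bar H$, hence onto $F$; so $G$ is large. In fact this half holds with $G/R_G$ replaced by any quotient of $G$.

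For the forward direction the crucial point is the lemma that $R_H=R_G$ whenever $H\leq_f G$. To see this, first note $R_G\leq H$ because $R_G$ lies in every finite index subgroup of $G$. Then any $K\leq_f H$ satisfies $K\leq_f G$, so $R_G\leq K$ and hence $R_G\leq R_H$; conversely any $L\leq_f G$ has $L\cap H\leq_f H$, so $R_H\leq L\cap H\leq L$ and hence $R_H\leq R_G$.

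Now suppose $G$ is large, witnessed by $H\leq_f G$ and a surjection $\pi\colon H\twoheadrightarrow F$ with $F$ a non-abelian free group. Since $F$ is residually finite, any homomorphism onto it must kill the finite residual of its domain, so $R_H\leq\ker\pi$; by the lemma $R_G=R_H\leq\ker\pi$, and since also $R_G\leq H$ the map $\pi$ descends to a surjection $H/R_G\twoheadrightarrow F$. But $H/R_G$ is exactly the image $q(H)$, a subgroup of $G/R_G$ of index $[G:H]<\infty$, and $G/R_G$ is finitely generated (being a quotient of the finitely generated group $G$); hence $G/R_G$ is large.

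The only real obstacle is isolating and proving the equality $R_H=R_G$ for $H\leq_f G$ cleanly, since it is precisely this that lets the finite residual pass through the inclusion $H\leq_f G$; once that is in hand, the observation that a surjection onto a residually finite group annihilates the finite residual finishes the argument with no recourse to the Alexander polynomial or covering space machinery used elsewhere in the paper.
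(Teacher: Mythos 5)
Your proof is correct and follows essentially the same route as the paper's (very terse) argument: the reverse direction is that largeness of any quotient pulls back to $G$, and the forward direction rests on the two facts that a homomorphism onto the residually finite group $F$ kills the finite residual and that $R_H=R_G$ for $H\leq_f G$, which you prove correctly. You have simply written out in full the details the paper leaves implicit.
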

\begin{proof}
A group is large if any quotient is large, whereas any homomorphism from $G$
to a residually finite group factors through $G/R_G$ and if $H\leq_f G$ then
$R_H=R_G$.
\end{proof}

Thus perhaps we should take the same approach as those who count finite
index subgroups of finitely generated groups by only considering residually
finite groups. However the example of $G=BS(2,3)$, where $G/R_G$ is soluble
but not finitely presented, means we can lose good properties of our original
group.
This assumption removes the obvious counterexamples which are SQ-universal
but not large, for instance taking free products of groups with no finite
index subgroups, and then the two properties begin to look more similar.
A recent result of \cite{os} shows that finitely generated groups with
infinitely many ends are SQ-universal. Whilst they cannot all be large, as
evidenced by these free products, it is straightforward to establish this
in the residually finite case by adapting an argument of Lubotzky from
\cite{lub}.
\begin{thm}
A residually finite group with infinitely many ends is large.
\end{thm}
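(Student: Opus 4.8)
The plan is to deduce this from Stallings' structure theorem for groups with more than one end, using residual finiteness in an essential way to remove the finite edge groups, and then to recognise a large virtually free quotient. Write $G$ for such a group (so $G$ is finitely generated, as is implicit in speaking of its number of ends). First I would apply Stallings' theorem: since $G$ has more than one end it splits nontrivially as an amalgam $A*_C B$ or an HNN extension $A*_C$ with $C$ finite, and the associated Bass--Serre tree $T$ carries a cocompact $G$-action whose edge stabilisers are conjugates of subgroups of $C$ and whose vertex stabilisers (the conjugates of $A$, and of $B$ in the amalgam case) have infinite index in $G$ since the splitting is nontrivial. Next, using that $G$ is residually finite and $C$ is finite, I would intersect over the finitely many nontrivial $c\in C$ a choice of finite index normal subgroups missing each $c$, obtaining $N\unlhd_f G$ with $N\cap C=\{1\}$; normality then gives $N\cap gCg^{-1}=\{1\}$ for all $g\in G$, so $N$ acts on $T$ with trivial edge stabilisers.

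The second step is structural. By Bass--Serre theory (compare the Kurosh subgroup theorem; see \cite{ls}), a group acting on a tree with trivial edge stabilisers is a free product of the vertex stabilisers it meets together with a free group. Since $N$ has finite index in $G$ it still acts cocompactly on $T$, so this free product is finite: $N\cong N_1*\cdots*N_k*F_r$ with each $N_i$ finitely generated. I would then observe that $N$, having finite index, cannot be contained in a vertex stabiliser of $T$ (those have infinite index in $G$), so this decomposition is genuinely nontrivial; thus $N$ can be written as $N=P*Q$ with $P,Q$ nontrivial finitely generated groups, which are moreover residually finite as subgroups of $G$.

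For the third step I would choose surjections from $P$ and $Q$ onto nontrivial finite groups $\bar P$ and $\bar Q$ (possible by residual finiteness) and collapse the corresponding kernels inside the respective free factors, obtaining $N\twoheadrightarrow\bar P*\bar Q$. The free product of two nontrivial finite groups is virtually free, and it is large --- having a finite index free subgroup of rank at least $2$ --- exactly when its Euler characteristic $1/|\bar P|+1/|\bar Q|-1$ is negative, i.e.\ unless $\bar P\cong\bar Q\cong\z/2\z$. If we were forced into that exceptional case for every choice of finite quotients, then $P$ and $Q$ would each have all finite quotients of order at most $2$, hence (being finitely generated and residually finite) would embed in a product of copies of $\z/2\z$ and so be finitely generated groups of exponent $2$, hence of order $2$; but then $N\cong\z/2\z*\z/2\z$ is two-ended, contradicting that $N$ (being of finite index in $G$) has infinitely many ends. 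So I can arrange $\bar P*\bar Q$ to be large, and pulling a finite index free subgroup back through $N\twoheadrightarrow\bar P*\bar Q$ produces a finite index subgroup of $N$, hence of $G$, mapping onto a nonabelian free group. Since largeness passes to a group from its quotients and between finite index over- and sub-groups, $G$ is large.

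I expect the main obstacle to be precisely the place where residual finiteness is used twice over: first to kill the finite edge group and reach an action with trivial edge stabilisers, and then to eliminate the infinite dihedral exceptional case, where one must combine residual finiteness of the two factors with the infinitely-many-ends hypothesis. The remaining ingredients --- Stallings' theorem, the Bass--Serre decomposition of groups acting on trees with trivial edge stabilisers, and the behaviour of largeness under passage to quotients and to finite index --- are standard.
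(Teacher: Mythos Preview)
Your argument is correct but follows a genuinely different path from the paper's. The paper (adapting an argument of Lubotzky) never passes to a finite-index subgroup of $G$: in the amalgam case $\Gamma=G_1*_A G_2$ it uses residual finiteness of the vertex groups to choose $M\unlhd_f G_1$ and $N\unlhd_f G_2$ with $M\cap A=I=N\cap B$ and with indices large enough that the induced quotient $(G_1/M)*_{\overline{\phi}}(G_2/N)$ is an amalgam of finite groups in which the amalgamated subgroup has index greater than $2$ on each side, hence is virtually non-abelian free; the HNN case is handled analogously by producing a non-ascending HNN extension of a finite group. Thus the paper exhibits a large quotient of $\Gamma$ itself, without invoking any Kurosh-type decomposition. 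Your route instead drops to a normal $N\leq_f G$ acting on the Bass--Serre tree with trivial edge stabilisers, decomposes $N$ as a free product via Bass--Serre theory, and only then maps onto a free product of finite groups. The payoff of your approach is a uniform treatment of the amalgam and HNN cases through the tree action; the paper's approach is more elementary, needing only that amalgams and HNN extensions of finite groups over proper subgroups are virtually free, and it avoids both the Kurosh step and the separate analysis of the infinite-dihedral exception.

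Two small points worth tightening. First, when you assert that the free-product decomposition $N\cong N_1*\cdots*N_k*F_r$ is nontrivial, your stated reason (that $N$ is not contained in a vertex stabiliser) only excludes $k=1$, $r=0$; you should also dismiss $k=0$ with $r\le 1$, which is immediate since $N$, having finite index in $G$, itself has infinitely many ends. Second, in the exceptional case your chain ``exponent $2$ \ldots\ hence of order $2$'' is compressed: a finitely generated group of exponent $2$ is a finite elementary abelian $2$-group, and being its own finite quotient then forces $|P|\le 2$ by your hypothesis on finite quotients.
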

\begin{proof}
If $\Gamma=G_1*_\phi G_2$ where $\phi$ is an isomorphism from $A$ a finite
subgroup of $G_1$ to $B$ a finite subgroup of $G_2$ then, as $G_1$ will be
residually finite, we can take $M\unlhd_fG_1$ with $M\cap A=I$ and
$[G_1:M]>2|A|$, meaning that the subgroup $AM/M$ of $G_1/M$ has index greater
than 2 and is isomorphic to $A$. We can also get $N\unlhd_fG_2$ with
$N\cap B=I$ and $[G_2:N]>2|B|$. Now we can form $(G_1/M)*_{\overline{\phi}}
(G_2/N)$, 
where $\overline{\phi}(aM)=\phi(a)N$ provides an isomorphism from $AM/M$
to $BN/N$. This is a quotient of $\Gamma$ and is virtually free by \cite{ser}
II Proposition 11,
with the index conditions ensuring that it is virtually non-abelian free
(see \cite{ser} 2.6 Exercise 3).

As for HNN extensions $\Gamma=G*_\phi$, where $\phi$ is an isomorphism with
domain a finite proper subgroup $A$ of $G$, and $\phi(A)\leq G$ is conjugate
to $A$ in $\Gamma$ via the stable letter $t$, we now take $N\unlhd_f\Gamma$
such that there exists $g\in G$ with $ag\notin N$ for all $a\in A$, which
implies that $AN\neq GN$. Thus $AN/N$ and $\phi(A)N/N$ are subgroups of
$GN/N$ which are conjugate in $\Gamma/N$ via $tN$, thus isomorphic, with
both of these proper subgroups. Hence the HNN extension $\langle GN/N,s|
s(aN)s^{-1}=\phi(a)N\rangle$ can be formed and this is a non-ascending
HNN extension of a finite group, thus it is virtually non-abelian free.
\end{proof}

A group $G$ is called LERF (equivalently subgroup separable) if every finitely
generated subgroup is an intersection of finite index subgroups. An
observation in \cite{bn} is that $G$ cannot be LERF if there is a finitely
generated subgroup $H$ and $t\in G$ with $tHt^{-1}\subset H$, thus proper
ascending HNN extensions of finitely generated groups are never LERF. If
$G$ is $F_n$-by-$\z$ and $\beta_1(G)\geq 2$ then it is possible for $G$ to
be simultaneously free-by-cyclic and also to be a proper ascending HNN
extension of a finitely generated free group with respect to another
associated homomorphism onto $\z$, so being LERF is quite a lot stronger
than merely being residually finite. However if we assume this we can gain
some very specific conclusions.
\begin{thm}
If $G$ is finitely presented and LERF then either $G$ has virtual first
Betti number equal to 0, or $G$ is large, or $G$ is virtually $L\rtimes\z$
for $L$ finitely generated.
\end{thm}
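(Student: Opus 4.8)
The plan is to establish the trichotomy by cases on the virtual first Betti number of $G$ and, when it is positive, on whether some finite cover of $G$ splits over $\z$ with finitely generated kernel. If $vb_1(G)=0$ we are in alternative (i). Otherwise pick $H\leq_f G$ with an epimorphism $\chi\colon H\twoheadrightarrow\z$; note that $H$ is finitely presented and, being a subgroup of a LERF group, is again LERF. If for some such pair $(H,\chi)$ the kernel $\ker\chi$ is finitely generated then, $\z$ being free, the extension splits and $H=\ker\chi\rtimes\z$, so $G$ is virtually $L\rtimes\z$ with $L$ finitely generated, i.e. alternative (iii). So from now on assume $vb_1(G)>0$ and that $\ker\chi$ is infinitely generated for \emph{every} finite index subgroup $H$ and every epimorphism $\chi$ onto $\z$; the goal is to show $G$ is large.

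Fix one such $H$ and $\chi$. Here I would invoke the structure theorem of Bieri and Strebel: as $H$ is finitely presented and $\ker\chi$ is infinitely generated, $H$ is an HNN extension $H=\langle B,t\mid tCt^{-1}=D\rangle$ realising $\chi$ (with $\chi(t)=1$ and $\chi(B)=0$) in which the base $B$ and the associated subgroups $C,D\le B$ are all finitely generated. We cannot have $C=D=B$, for that would make $H=B\rtimes\z$ with $\ker\chi=B$ finitely generated; and we cannot have exactly one of $C,D$ equal to $B$, since $C=B$ gives $tBt^{-1}=D\subsetneq B$ and $D=B$ gives $t^{-1}Bt=C\subsetneq B$, contradicting LERF by the observation of \cite{bn} that no finitely generated subgroup of a LERF group can be properly conjugated into itself. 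Hence $C$ and $D$ are \emph{proper} finitely generated subgroups of the finitely generated group $B$, and the splitting is non-ascending.

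The LERF hypothesis now does the real work. Since $B$ and $C$ are finitely generated they are separable in $H$, so choosing $b\in B\setminus C$ and $h\in H\setminus B$ and intersecting the two finite index normal subgroups that separate them off, we get a finite quotient $q\colon H\twoheadrightarrow Q$ in which $q(C)\subsetneq q(B)\subsetneq Q$. Put $H_1=\ker q\le_f H$ and let $T$ be the Bass--Serre tree of the splitting, so that $H_1$ acts on $T$ with finite quotient graph $\Theta=T/H_1$. The edges and vertices of $T$ form the $H$-sets $H/C$ and $H/B$, so counting $H_1$-orbits through $Q$ gives $|E(\Theta)|=[Q:q(C)]$ and $|V(\Theta)|=[Q:q(B)]$, whence, applying Lagrange to the proper inclusions $q(C)<q(B)<Q$,
\[
\beta_1(\Theta)=|E(\Theta)|-|V(\Theta)|+1=\frac{|Q|}{|q(C)|}-\frac{|Q|}{|q(B)|}+1\ \ge\ \frac{|Q|}{2|q(C)|}+1\ \ge\ 2 .
\]
Consequently the canonical surjection from $H_1$ (the fundamental group of the graph of groups induced on $\Theta$) onto $\pi_1(\Theta)=F_{\beta_1(\Theta)}$ is a homomorphism onto a non-abelian free group, so $H_1$, and therefore $G$, is large.

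The step I expect to be the crux is the passage immediately after Bieri--Strebel: one has to recognise that in the ``infinitely generated kernel'' case the only remaining configuration is a non-ascending HNN splitting whose edge groups may well be infinite --- so Theorem 7.3 and an ends count are unavailable (a surface group is one-ended yet must land in alternative (ii)) --- and that the correct move is not to collapse the edge group but to pass to a finite cover and extract the first Betti number of the resulting underlying graph. Everything after Bieri--Strebel is then the elementary Lagrange and Euler-characteristic computation displayed above, with subgroup separability entering precisely to guarantee that the finite quotient $Q$ with $q(C)\subsetneq q(B)\subsetneq Q$ exists.
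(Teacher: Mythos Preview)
Your argument is correct and tracks the paper's proof closely: both invoke Bieri--Strebel to obtain an HNN decomposition with finitely generated base and edge groups, use the Blass--Neumann observation to exclude the strictly ascending case under LERF, and then use separability of the finitely generated edge group inside $H$ to extract largeness in the remaining non-ascending case. The only real difference is the endgame. The paper pushes $H$ \emph{forward} onto the HNN extension $(BN/N)*_{\overline\phi}$ of the \emph{finite} group $BN/N$ over its proper subgroup $CN/N$ and cites \cite{ser} to see that this quotient is virtually non-abelian free; you instead pull \emph{back} to the finite-index normal subgroup $H_1=\ker q$ and compute $\beta_1$ of the quotient of the Bass--Serre tree by an explicit Euler-characteristic count. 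These are two phrasings of the same fact, and yours has the small advantage of being self-contained. Two incidental remarks: you do not actually need $q(B)\subsetneq Q$, since if $q(B)=Q$ then $|V(\Theta)|=1$ and your count still gives $\beta_1(\Theta)=|E(\Theta)|\ge 2$, so separating $h$ from $B$ is superfluous; and the paper's case split is on whether $C=B$ in the Bieri--Strebel splitting rather than on whether $\ker\chi$ is finitely generated, though once LERF has ruled out strictly ascending extensions these come to the same thing.
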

\begin{proof}  
Any group with positive first Betti number is an HNN extension (it is a
semidirect product $(\mbox{ker }\chi)\rtimes\z$ for $\chi$ a homomorphism
onto $\z$) but if it is finitely presented then \cite{bs78} tells us that
it is an HNN extension $L*_\phi$ with stable letter $t$ and with
$L$ and the domain $A$ of $\phi$ both
finitely generated. Thus on taking $H\leq_fG$ with $\beta_1(H)>0$ we have
$H=L*_\phi$ with presentation 
\[\langle L,t|ta_it^{-1}=\phi(a_i)\mbox{ for }1\leq i\leq m\rangle\]
where $a_1,\ldots,a_m$ is a generating set for $A$.
Now if $A\neq L$ then, as subgroups of LERF groups are also
LERF, we have $F\leq_fH$ which contains $A$ but not $L$. We can take $N\leq F$
which is normal in $H$ and of finite index. This gives us
$AN\leq F<LN$ and so we can argue
as in Theorem 7.2 to get a non-ascending HNN extension: 
We have that $LN/N$ is finite and $AN\neq LN$ implies that $AN/N$ is a proper
subgroup of $LN/N$.
Now the isomorphism $\phi$ from $A$ to $\phi(A)$ is induced
by conjugation by $t$, so that in $H/N$ we have that $AN/N$ and $\phi(A)N/N$
are conjugate by the element $tN$. Hence the induced map $\overline{\phi}(aN)
=\phi(a)N$ is well defined and is an isomorphism between these two subgroups.
Therefore we can form $(LN/N)*_{\overline
{\phi}}$ with the domain of $\overline{\phi}$ equal to $AN/N$, and this has
presentation
\[\langle LN/N,s|s(a_iN)s^{-1}=\overline{\phi}(a_iN)
\mbox{ for }1\leq i\leq m\rangle\]
which is an image of $H$ under the homomorphism $L\mapsto LN/N,t\mapsto s$
and is large by \cite{ser} II Proposition 11 so $H$ is large too.

Otherwise the HNN extension is ascending but the LERF condition means 
that $H$ is in fact a semidirect product.
\end{proof}

Going back to deficiency 1 groups, we have \cite{hill} Theorem
6 which states that if $G$ has deficiency 1 and is an ascending HNN 
extension with
base the finitely generated subgroup $L$
then the geometric dimension of $G$ (thus the cohomological dimension) is at 
most two. But on combining this with \cite{gmsw} Corollary 2.5, which states 
that if $L$ is of type $FP_2$ and has cohomological dimension 2 then
$G$ has cohomological dimension 3, we see that if $G$ has deficiency 1 then
$L$ finitely presented (or even $FP_2$) implies that $L$ is in fact
free. This means that the only way a deficiency 1 group $G$ could 
fail the 
Tits alternative of not being virtually soluble and not containing $F_2$ is
for $G$ to be an ascending HNN extension $L*_\phi$
where $L$ is finitely generated but not finitely presented
and where $L$ fails the Tits alternative. It is unknown whether this can occur
but at least we can conclude
the Tits alternative holds for coherent deficiency 1 groups.
In \cite{wil} it is shown that a soluble deficiency 1 group is $BS(1,m)$ or
$\z$. As these groups are coherent, this result is easily extended to
virtually soluble deficiency 1 groups by the above and the result in 
\cite{bs78}
that a finitely presented group $G$ with $\beta_1(G)>0$ which does not
contain $F_2$ is an ascending HNN extension of a finitely generated group. 

We can use this to obtain some results about LERF groups in particular cases.
\begin{co}
If $G$ is LERF and has deficiency 1 then either $G$ is SQ-universal or $G$ is
$BS(1,\pm 1)$ or $\z$ or $G=L\rtimes \z$ for $L$ finitely generated
but not finitely presented.
\end{co}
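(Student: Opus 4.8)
The plan is to combine Theorem 7.4 with the structural facts just established about deficiency 1 groups. First I would apply Theorem 7.4 directly to $G$: since $G$ is finitely presented (having deficiency 1) and LERF, we land in one of three cases. In the first case $G$ has virtual first Betti number $0$, but a deficiency 1 group has $\beta_1(G)\geq 1$ and moreover finite index subgroups also have deficiency 1 (by the Reidemeister--Schreier count at the start of Section 3, using that $G$ is not large here) hence first Betti number at least $1$, so this case forces largeness to be absent yet $\mathrm{vb}_1=0$ is impossible unless — wait, more carefully: $\mathrm{vb}_1(G)=0$ means every finite index subgroup has $\beta_1=0$, contradicting $\beta_1(G)\geq \mathrm{def}(G)=1$. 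So this case does not occur. In the second case $G$ is large, hence SQ-universal by \cite{p1}, and we are done. So the real content is the third case: $G$ is virtually $L\rtimes\z$ for $L$ finitely generated.

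So suppose $G$ has a finite index subgroup $H\cong L\rtimes\z$ with $L$ finitely generated. By Proposition 4.3(iii)-type reasoning — actually more simply, $H$ also has deficiency 1 (again the Schreier count, assuming $G$ not large) so now I invoke the discussion immediately preceding this corollary: a deficiency 1 group which is an ascending HNN extension with finitely generated base $L$ of type $FP_2$ has $L$ free, by \cite{hill} Theorem 6 combined with \cite{gmsw} Corollary 2.5. Here $H=L\rtimes\z$ is in particular an ascending (indeed genuine) HNN extension with base $L$; if $L$ is finitely presented (equivalently $FP_2$ here) then $L$ is free. Thus either $L$ is not finitely presented — which is exactly the last alternative in the statement, and I should note that $G$ being virtually $L\rtimes\z$ with $L$ f.g. not f.p. is the stated exceptional conclusion (strictly the statement says $G=L\rtimes\z$; I would either weaken to ``virtually'' or observe that LERF passes to the relevant finite-index subgroup and the cohomological-dimension obstruction is a commensurability invariant, so it suffices to treat $H$) — or $L=F_k$ is free.

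It remains to handle $H=F_k\rtimes\z$. If $k=0$ then $H=\z$, giving $G$ virtually $\z$, hence $G=\z$ or $BS(1,\pm1)$ (the infinite virtually-$\z$ deficiency 1 groups; one can cite \cite{wil} on soluble deficiency 1 groups since virtually-$\z$ groups are virtually soluble and the coherence extension discussed before the corollary applies). If $k=1$ then $H$ is $\z\rtimes\z$, i.e.\ $\z\times\z$ or the Klein bottle group, and again $G$ is virtually one of these, so virtually soluble, forcing $G=BS(1,\pm1)$ or $\z$ by the same extension of \cite{wil}. If $k\geq 2$ then $H$ is $F_k$-by-$\z$ with $F_k$ of rank at least $2$, so by Theorem 5.1 and Corollary 5.2, $H$ is either large or non-elementary word hyperbolic or $BS(1,\pm1)$; the last is excluded since $k\geq 2$. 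If $H$ is large then so is $G$, contradiction; if $H$ is non-elementary word hyperbolic then $H$ is SQ-universal by \cite{ol}, hence so is $G$ since it contains $H$ with finite index (SQ-universality passes up to finite overgroups). Assembling the cases: $G$ is SQ-universal, or $G=BS(1,\pm1)$ or $\z$, or $G$ is virtually $L\rtimes\z$ with $L$ finitely generated but not finitely presented.

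The main obstacle I anticipate is the bookkeeping around ``$G=L\rtimes\z$'' versus ``$G$ virtually $L\rtimes\z$'': Theorem 7.4 only gives the virtual statement, so either the corollary's phrasing should be read loosely, or one argues that when $G$ itself (not just a finite-index subgroup) has deficiency 1 and is LERF and not SQ-universal, the ascending HNN structure from \cite{bs78} applied to $G$ directly combined with LERF forces a genuine semidirect product $G=L\rtimes\z$ — this is essentially the last line of the proof of Theorem 7.4 applied with $H=G$, so I would route through that rather than through a finite-index subgroup, and only drop to finite index for the $F_k$-by-$\z$ largeness/hyperbolicity dichotomy, where SQ-universality is insensitive to commensurability anyway.
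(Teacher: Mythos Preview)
Your proposal is essentially correct and ultimately follows the same route as the paper, though you arrive at it circuitously. The paper's proof is precisely what you sketch in your final paragraph: since $\beta_1(G)\geq\mathrm{def}(G)=1$, one applies the \emph{proof} of Theorem~7.3 (which you are calling Theorem~7.4) with $H=G$ itself, yielding directly that $G$ is large or $G=L\rtimes\z$ with $L$ finitely generated---no ``virtually'' to worry about. Then if $L$ is finitely presented, the Hillman--GMSW argument forces $L$ free, and the paper simply invokes Theorem~5.4(iii) for the SQ-universality of $G$ when $L$ is non-abelian free, rather than re-deriving the large/word-hyperbolic dichotomy via Corollary~5.2 and \cite{ol} as you do.

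So the only real divergence is organisational: you spend most of the proof going through the statement of Theorem~7.3 (incurring the ``virtually'' headache and then patching it with commensurability arguments and the extension of Wilson's result), before realising at the end that applying the proof directly to $G$ avoids all of this. The paper starts there. Your case analysis for $k=0,1$ via virtually soluble deficiency~1 groups is sound but unnecessary once you have $G=L\rtimes\z$ on the nose: then $L$ trivial or cyclic gives $G=\z$ or $BS(1,\pm1)$ immediately.
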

\begin{proof}
Certainly $\beta_1(G)>0$ so by the proof of Theorem 7.3 $G$ is large or 
equals $L\rtimes\z$ with $L$ finitely generated. If $L$ is finitely
presented then the above comment shows that $L$ is free and Theorem 5.4 (iii) 
applies if $L$ is non-abelian free.
\end{proof}

We can finish by making some progress on P.\,M.\,Neumann's conjecture given
in the last section.
\begin{co}
If $G$ is a 1 relator group which is LERF then either $G$ is SQ-universal
or $G$ is cyclic or $G=BS(1,\pm 1)$.
\end{co}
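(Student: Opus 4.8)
The plan is to run the usual reductions for one relator groups and then apply Corollary 7.4, the only real work being to discard the final alternative it permits. First I would reduce to a two generator one relator presentation: if $G$ has a one relator presentation on three or more generators then its deficiency is at least $2$, so $G$ is large by \cite{bp} and hence SQ-universal by \cite{p1}, while a group with a one generator one relator presentation is cyclic. So we may assume $G=\langle a,b|r\rangle$ with $r$ a non-trivial word. If $r$ is a proper power then $G$ has torsion and is large, hence SQ-universal, by \cite{gr} or \cite{st}. Otherwise $r$ is not a proper power, so $G$ is torsion-free and its deficiency is $1$ (it is at most $\beta_1(G)\le 2$, and deficiency $2$ would again give largeness by \cite{bp}); thus Corollary 7.4 applies to $G$.

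Corollary 7.4 then yields that $G$ is SQ-universal, or $G=BS(1,\pm 1)$, or $G=\z$ (which is cyclic), or $G=L\rtimes\z$ for some $L$ which is finitely generated but not finitely presented. The first three are conclusions we want, so the proof comes down to ruling out the last. In that case $G$ is an ascending HNN extension of the finitely generated group $L$, with $L$ normal and infinite cyclic quotient, and for a two generator one relator group this forces $L$ to be free: this is part of Brown's analysis in \cite{bks} of which such groups are ascending HNN extensions of a finitely generated, hence necessarily free, base, and it can alternatively be recovered from the remark preceding Corollary 7.4 (\cite{hill} Theorem 6 together with \cite{gmsw} Corollary 2.5) once one knows that the finitely generated kernel of a homomorphism from a one relator group onto $\z$ is of type $FP_2$. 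But a finitely generated free group is finitely presented, contradicting the hypothesis on $L$, so this possibility cannot occur and $G$ is SQ-universal, cyclic or $BS(1,\pm 1)$ as claimed.

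The single substantive step is verifying that the base $L$ in the residual case of Corollary 7.4 is free; I would lean on Brown \cite{bks}, which is already invoked in Section 6, rather than re-establishing the $FP_2$ property of $L$ from the Bieri--Strebel invariant, though either route closes the argument. Along the way one should check two easy points, so that the visibly solvable one relator groups do not sneak back in as counterexamples: a one generator one relator presentation really does give a finite or infinite cyclic group, and $BS(1,m)$ with $|m|\ge 2$ fails to be LERF by the observation of \cite{bn}, since it conjugates the finitely generated subgroup $\langle y\rangle$ properly into itself.
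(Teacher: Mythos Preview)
Your proposal is correct and follows essentially the same approach as the paper's proof: reduce to the deficiency one case, invoke Corollary 7.4, and eliminate the residual $L\rtimes\z$ alternative by appealing to Brown \cite{bks} to see that $L$ is free, hence finitely presented. The additional observations you make (the proper-power case and the non-LERFness of $BS(1,m)$ for $|m|\ge 2$) are correct but not strictly needed, since these situations are already absorbed by the main argument.
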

\begin{proof}
We know that $G$ is finite cyclic or has deficiency at least two or has
deficiency one whereupon Corollary 7.4 applies. But if $G=L\rtimes\z$ for
$L$ finitely generated then $L$ must be free by \cite{bks} Section 4.
\end{proof}

\end{document}